\documentclass [twoside,reqno,   12pt] {amsart}



\usepackage{amsfonts}
\usepackage{amssymb}
\usepackage{a4}
\usepackage{color}

\newtheorem{thm}{Theorem}[section]

\newtheorem{lem}[thm]{Lemma}
\newtheorem{prop}[thm]{Proposition}

\theoremstyle{definition}

\numberwithin{equation}{section}

\renewcommand{\Re}{\hbox{Re}\,}
\renewcommand{\Im}{\hbox{Im}\,}

\newcommand{\C}{\mathbb{C}}

\newcommand{\R}{\mathbb{R}}

\newcommand{\supp}{\operatorname{supp}}

\parindent0pt
\parskip6pt

\def\hat{\widehat}
\def\tilde{\widetilde}
\def \bfo {\begin {eqnarray*} }
\def \efo {\end {eqnarray*} }
\def \ba {\begin {eqnarray*} }
\def \ea {\end {eqnarray*} }
\def \beq {\begin {eqnarray}}
\def \eeq {\end {eqnarray}}
\def \supp {\hbox{supp }}

\def \p {\partial}

\def\hat{\widehat}
\def\tilde{\widetilde}
\def \bfo {\begin {eqnarray*} }
\def \efo {\end {eqnarray*} }
\def \ba {\begin {eqnarray*} }
\def \ea {\end {eqnarray*} }
\def \beq {\begin {eqnarray}}
\def \eeq {\end {eqnarray}}
\def \supp {\hbox{supp }}

\def \p {\partial}


\begin{document}

 \title[Magnetic Schr\"odinger operator with bounded potentials]{Uniqueness in an inverse boundary problem for a magnetic Schr\"odinger operator with a bounded magnetic potential}

\author[Krupchyk]{Katsiaryna Krupchyk}

\address
        {K. Krupchyk, Department of Mathematics and Statistics \\
         University of Helsinki\\
         P.O. Box 68 \\
         FI-00014   Helsinki\\
         Finland}

\email{katya.krupchyk@helsinki.fi}

\author[Uhlmann]{Gunther Uhlmann}

\address
       {G. Uhlmann, Department of Mathematics\\
       University of Washington\\
       Seattle, WA  98195-4350\\
       and
       Department of Mathematics\\ 
       340 Rowland Hall\\
        University of California\\
        Irvine, CA 92697-3875\\
       USA}
\email{gunther@math.washington.edu}

\maketitle

\begin{abstract} 

We show that the knowledge of the set of the Cauchy data on the boundary of a bounded open set in $\R^n$, $n\ge 3$,  for the magnetic Schr\"odinger operator with $L^\infty$ magnetic and electric potentials determines the magnetic field and electric potential inside the set uniquely.  The proof is based on a Carleman estimate for the magnetic Schr\"odinger operator with a gain of two derivatives.

\end{abstract}

\section{Introduction and statement of result}

Let $\Omega\subset \R^n$, $n\ge 3$, be a bounded open set, and let $u\in C^\infty_0(\Omega)$.  We consider the magnetic Schr\"odinger operator,
\begin{align*}
L_{A,q}(x,D)&u(x):=\sum_{j=1}^n (D_j+A_j(x))^2u(x)+q(x)u(x)\\
&=-\Delta u(x) +A(x)\cdot Du(x) +D\cdot (A(x)u(x))   +((A(x))^2+q(x))u(x),
\end{align*}
where $D=i^{-1}\nabla$, $A\in L^\infty(\Omega,\C^n)$ is the magnetic potential, and $q\in L^\infty(\Omega,\C)$ is the electric potential. We have $Au\in L^\infty(\Omega,\C^n)\cap\mathcal{E}'(\Omega,\C^n)$, and therefore, 
\[
L_{A,q}:C^\infty_0(\Omega)\to H^{-1}(\R^n)\cap\mathcal{E}'(\Omega)
\]
is a bounded operator. Here $\mathcal{E}'(\Omega)=\{v\in \mathcal{D}'(\Omega):\supp(v)\textrm{ is compact}\}$.

Let us now introduce the Cauchy data for an $H^1(\Omega)$ solution  $u$ to the equation 
\begin{equation}
\label{eq_1_1}
L_{A,q}u=0\quad \textrm{in}\quad \Omega,
\end{equation}
in the sense of distributions.  First, following \cite{Astala_Paivarinta, Salo_diss}, we define the trace space of the space $H^1(\Omega)$ as the quotient space $H^1(\Omega)/H_0^1(\Omega)$. The associated trace map 
$T:H^1(\Omega)\to H^1(\Omega)/H_0^1(\Omega)$, $Tu=[u]$, is the quotient map.  Here $H^1_0(\Omega)$ is the closure of $C_0^\infty(\Omega)$ with respect to the $H^1(\Omega)$--topology. 

Notice that if $\Omega$ has a Lipschitz boundary, then the space $H^1(\Omega)/H_0^1(\Omega)$ can be naturally identified with  the Sobolev space $H^{1/2}(\p \Omega)$. Indeed, in this case the kernel of the continuous surjective map $H^1(\Omega)\to H^{1/2}(\p \Omega)$, $u\mapsto u|_{\p \Omega}$ is precisely $H^1_0(\Omega)$, see \cite[Theorems 3.37 and 3.40]{McLean_book}. 

For $u\in H^1(\Omega)$ satisfying \eqref{eq_1_1}, we can define $N_{A,q}u$, formally given by $N_{A,q}u=(\p_\nu u+i(A\cdot \nu)u)|_{\p \Omega}$,  as an element of the dual space $(H^1(\Omega)/H^1_0(\Omega))'$ as follows.  For $[g]\in H^1(\Omega)/H^1_0(\Omega)$, we set 
\begin{equation}
\label{eq_trace_boundary_arbitrary}
\begin{aligned}
(N_{A,q}u, [g])_\Omega:=\int_{\Omega} (\nabla u\cdot\nabla g + iA\cdot (u\nabla g-g\nabla u)+ (A^2+q)u g)\,dx.
\end{aligned}
\end{equation}
As $u$ is a solution to \eqref{eq_1_1},  $N_{A,q}u$ is a well-defined element of $(H^1(\Omega)/H^1_0(\Omega))'$.

We define the set of the Cauchy data for solutions of the magnetic Schr\"odinger equation as follows, 
\[
C_{A,q}:=\{(Tu,N_{A,q}u): u\in H^1(\Omega)\textrm{ and }  L_{A,q}u=0 \textrm{ in } \Omega\}.
\]
The inverse boundary value problem for the magnetic Schr\"odinger operator $L_{A,q}$ is to determine $A$ and $q$ in $\Omega$ from the set of the Cauchy data $C_{A,q}$.

Similarly to \cite{Sun_1993}, there is an obstruction to uniqueness in this problem given by the following gauge equivalence of the set of the Cauchy data: if $\psi\in W^{1,\infty}$ in a neighborhood of $\overline{\Omega}$ and $\psi|_{\p \Omega}=0$, then 
$C_{A,q}=C_{A+\nabla\psi,q}$,
see Lemma \ref{lem_obstruction_to_uniqueness} below. Hence, the map $A\mapsto A+\nabla\psi$ transforms the magnetic potential into a gauge equivalent one but preserves the induced magnetic field $dA$,
which is defined by 
\[
dA=\sum_{1\le j<k\le n}(\p_{x_j}A_k-\p_{x_k}A_j)dx_j\wedge dx_k,
\]
in the sense of distributions.  Here $A=(A_1,\dots,A_n)$. In view of this and of the fact that 
the magnetic field is a physically observable quantity,  one may hope to recover the magnetic field $dA$ and the electric potential $q$ in $\Omega$ from the set of the Cauchy data  $C_{A,q}$. 

As it has been shown by several authors, the knowledge of the set of the Cauchy data $C_{A,q}$ for the magnetic Schr\"odinger operator $L_{A,q}$ does determine the magnetic field $dA$ and the electric potential $q$ in $\Omega$ uniquely, 
under certain regularity assumptions on $A$ and $q$. In \cite{Sun_1993}, this result was established for magnetic potentials in $W^{2,\infty}$, satisfying a smallness condition, and $L^\infty$ electric potentials.  In \cite{NakSunUlm_1995}, the smallness condition 
was eliminated for smooth magnetic and electric potentials, and for compactly supported $C^2$ magnetic potentials and $L^\infty$ electric potentials.  The uniqueness results were subsequently extended to 
$C^1$ magnetic potentials in \cite{Tolmasky_1998}, to some less regular but small potentials in \cite{Panchenko_2002}, and to Dini continuous magnetic potentials in \cite{Salo_diss}.

The purpose of this paper is to extend the uniqueness result to the case of magnetic Schr\"odinger operators with magnetic potentials that are of class  $L^\infty$. Our main result is as follows. 

\begin{thm}
\label{thm_main}

Let $\Omega\subset \R^n$, $n\ge 3$, be a bounded open set, and let $A_1,A_2\in L^\infty(\Omega,\C^n)$ and $q_1,q_2\in L^\infty(\Omega,\C)$. If $C_{A_1,q_1}=C_{A_2,q_2}$, then $dA_1=dA_2$ and $q_1=q_2$ in $\Omega$. 
\end{thm}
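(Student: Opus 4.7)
I would follow the Sylvester--Uhlmann strategy, with the main novelty being a Carleman estimate with a gain of two derivatives to handle the low regularity of the magnetic potentials. The first step is to extract from the hypothesis $C_{A_1,q_1}=C_{A_2,q_2}$ a suitable integral identity on $\Omega$. Given $u_1\in H^1(\Omega)$ with $L_{A_1,q_1}u_1=0$, the hypothesis produces $u_2\in H^1(\Omega)$ with $L_{A_2,q_2}u_2=0$, $Tu_1=Tu_2$ and $N_{A_1,q_1}u_1=N_{A_2,q_2}u_2$. Pairing the flux identity with a test solution $v\in H^1(\Omega)$ of the formal transpose equation $L_{-A_2,q_2}v=0$ via \eqref{eq_trace_boundary_arbitrary}, using $u_1-u_2\in H^1_0(\Omega)$, and extending $A_1,A_2$ by zero outside $\Omega$ to kill the boundary term, I should arrive at an identity of the schematic form
\[
\int_\Omega\!\bigl[(A_1-A_2)\cdot\bigl(u_1\nabla v-v\nabla u_1\bigr)+\bigl(A_1^2-A_2^2+q_1-q_2\bigr)u_1v\bigr]\,dx=0.
\]

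The heart of the argument is the construction of complex geometric optics (CGO) solutions to $L_{A,q}u=0$ and to its formal transpose, with $A\in L^\infty$ only. For $\zeta\in\C^n$ with $\zeta\cdot\zeta=0$ and $|\zeta|$ of order one, I would seek $u=e^{x\cdot\zeta/h}(a+r)$ with $a$ a smooth amplitude satisfying the $\bar\partial$-type transport equation $(\zeta\cdot D+\zeta\cdot A)a=0$, and $r$ a remainder of size $o(1)$ in $L^2$ as $h\to 0^+$. After semiclassical conjugation, the equation for $r$ takes the form $P_\zeta r=g$, where $P_\zeta=e^{-x\cdot\zeta/h}\circ h^2 L_{A,q}\circ e^{x\cdot\zeta/h}$ contains a first-order perturbation with $L^\infty$ coefficients coming from $A$. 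Solving this equation is the principal obstacle: since $A$ is merely bounded, the classical $L^2$ Carleman estimate, which gains only one semiclassical derivative, does not close. The plan is instead to establish a Carleman estimate for the conjugated Laplacian $P_{0,\zeta}=e^{-x\cdot\zeta/h}\circ(-h^2\Delta)\circ e^{x\cdot\zeta/h}$ with a gain of two derivatives,
\[
h^2\|u\|_{H^1_{\mathrm{scl}}(\R^n)}\lesssim\|P_{0,\zeta}u\|_{H^{-1}_{\mathrm{scl}}(\R^n)},\qquad u\in C^\infty_0(\R^n),
\]
in which the $H^{-1}_{\mathrm{scl}}$-norm on the right is exactly the dual of the $H^1_{\mathrm{scl}}$-norm on the left, and therefore accommodates the $L^\infty$ first-order perturbation $A\cdot D$. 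A Hahn--Banach/duality argument then yields solvability of $P_\zeta r=g$ in $H^1_{\mathrm{scl}}$ for $h$ sufficiently small, and hence the desired CGO family.

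With CGOs in hand, I fix $\xi\in\R^n$ and choose $\zeta_1,\zeta_2\in\C^n$ depending on $h$ with $\zeta_j\cdot\zeta_j=0$ and $\zeta_1+\zeta_2=-ih\xi$, so that the leading exponentials combine into $e^{-ix\cdot\xi}$. Inserting $u_1=e^{x\cdot\zeta_1/h}(a_1+r_1)$ and $v=e^{x\cdot\zeta_2/h}(a_2+r_2)$ into the integral identity, the gradients on $u_1$ and $v$ bring out factors of $\zeta_j/h$; after multiplying by $h$ and letting $h\to 0^+$, the zeroth-order $(A_j^2,q_j)$ contributions drop out and the first-order part yields, upon freely varying the amplitudes $a_1,a_2$ and the transverse direction encoded in $\zeta_1$,
\[
\widehat{\alpha\cdot(A_1-A_2)}(\xi)=0\qquad\text{for every }\alpha\in\C^n\text{ with }\alpha\cdot\xi=0.
\]
Since $\xi$ is arbitrary, this is equivalent to $\widehat{dA_1-dA_2}=0$, whence $dA_1=dA_2$ in $\Omega$. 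To finish, Lemma \ref{lem_obstruction_to_uniqueness} lets me gauge-transform $A_2$ into $A_1$ without altering the Cauchy data; the integral identity then reduces to the purely scalar one $\int_\Omega(q_1-q_2)u_1 v\,dx=0$, and a second application of the CGO construction gives $\widehat{q_1-q_2}(\xi)=0$ for every $\xi\in\R^n$, completing the proof.
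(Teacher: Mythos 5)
Your overall plan—integral identity from equality of Cauchy data, CGO solutions built via a Carleman estimate in the $H^{-1}_{\mathrm{scl}}\to H^1_{\mathrm{scl}}$ duality, and a final gauge step for $q$—is the right architecture and matches the paper. But there are two substantive gaps in the middle.

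First, the amplitude. You write the transport equation $(\zeta\cdot D+\zeta\cdot A)a=0$ and assert that $a$ is a \emph{smooth} amplitude. With $A\in L^\infty$ only, the solution $a=e^{\Phi}$ with $\Phi=N_\zeta^{-1}(-i\zeta\cdot A)$ is merely $L^\infty$, not smooth, and you cannot take derivatives of it in estimating the error term $g=P_\zeta r$. The fix in the paper is to mollify $A$ to $A^\sharp=A*\Psi_\tau$ and solve the transport equation with $A^\sharp$; the discrepancy $2hi\,\zeta_0\cdot(A-A^\sharp)\,a$ then lands on the right-hand side, and the crucial observation is that it suffices to control it in $L^2$ (since the Carleman estimate has $H^{-1}_{\mathrm{scl}}$ on the right), so $\|A-A^\sharp\|_{L^2}\to 0$ as $\tau\to 0$ is all that is needed. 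Choosing $\tau=h^\sigma$, $0<\sigma<1/2$, balances against the $\mathcal{O}(\tau^{-|\alpha|})$ blow-up of derivatives of $A^\sharp$. Your proposal does not address this regularization at all, and without it the construction does not close.

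Second, and more seriously, you claim that ``upon freely varying the amplitudes $a_1,a_2$'' one arrives at $\widehat{\alpha\cdot(A_1-A_2)}(\xi)=0$. The amplitudes are \emph{not} free: the transport equations force $a_1=e^{\Phi_1^\sharp}$ and $a_2=e^{\Phi_2^\sharp}$ where $\Phi_j^\sharp$ is determined by $A_j$ via the Cauchy transform $N_{\zeta_0}^{-1}$. After multiplying by $h$ and sending $h\to 0$, what one actually obtains is
\begin{equation*}
(\mu_1+i\mu_2)\cdot\int_{\R^n}(A_1-A_2)\,e^{ix\cdot\xi}\,e^{\Phi_1+\overline{\Phi_2}}\,dx=0,
\end{equation*}
with a nontrivial exponential weight $e^{\Phi_1+\overline{\Phi_2}}$. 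Removing this weight is a genuine step: the paper proves (Proposition~\ref{prop_eskin_ralston}, an Eskin--Ralston type identity) that for $W\in(L^\infty\cap\mathcal{E}')(\R^n,\C^n)$ and $\phi=N_{\mu_1+i\mu_2}^{-1}(-i(\mu_1+i\mu_2)\cdot W)$ one has $(\mu_1+i\mu_2)\cdot\int W e^{ix\cdot\xi}e^\phi\,dx=(\mu_1+i\mu_2)\cdot\int W e^{ix\cdot\xi}\,dx$, via a Gauss-theorem contour argument and an approximation from $C^\infty_0$ to $L^\infty\cap\mathcal{E}'$. Your proposal omits this entirely, and without it you cannot pass from the weighted identity to the Fourier condition on $A_1-A_2$.

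A smaller point on the final step: once $dA_1=dA_2$, the primitive $\psi$ with $d\psi=A_1-A_2$ need not vanish on $\partial\Omega$, so Lemma~\ref{lem_obstruction_to_uniqueness} cannot be applied directly in $\Omega$. The paper extends $A_1-A_2$ by zero, uses the Poincar\'e lemma for currents to get $\psi\in W^{1,\infty}(\R^n)$ supported in a larger ball $B\supset\supset\Omega$, and then needs a lemma (Proposition~\ref{prop_Cauchy_data}) showing equality of Cauchy data propagates from $\Omega$ to $B$. You should build this step in explicitly.
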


Notice in particular that in Theorem \ref{thm_main} no regularity assumptions on the boundary of $\Omega$ are required.

The key ingredient in the proof of Theorem \ref{thm_main} is a construction of  complex geometric optics solutions for the magnetic Schr\"odinger operator $L_{A,q}$ with $A\in L^\infty(\Omega,\C^n)$ and $q\in L^\infty(\Omega,\C)$.  When constructing such solutions, we shall first derive a Carleman estimate for the magnetic Schr\"odinger operator $L_{A,q}$, with a gain of two derivatives, which  is based on the corresponding Carleman estimate for the Laplacian, obtained in \cite{Salo_Tzou_2009}.  Another crucial observation, which allows us to handle the case of $L^\infty$ magnetic potentials is that it is in fact sufficient to approximate the magnetic potential by a sequence of smooth vector fields, in the $L^2$ sense.  

We would also like to mention that another important inverse boundary value problem, for which the issues of regularity have been studied extensively, is Calder\'on's problem for the conductivity equation, see \cite{Calderon}.  The unique identifiability of $C^2$ conductivities from boundary measurements was established in \cite{Syl_Uhl_1987}. The regularity assumptions were relaxed to conductivities having  $3/2+\varepsilon$ derivatives in \cite{Brown_1996}, and the uniqueness 
for conductivities having exactly $3/2$ derivatives was obtained in \cite{Paiv_Pan_Uhl}, see also \cite{Brown_Torres_2003}. In \cite{Green_Lassas_Uhlmann_2003}, uniqueness for conormal conductivities in $C^{1+\varepsilon}$ was shown.  The recent work \cite{Hab_Tataru} proves a uniqueness result for Calder\'on's problem with conductivities 
of class $C^1$ and with Lipschitz continuous conductivities, which are close to the identity in a suitable sense. 

The paper is organized as follows.    Section \ref{sec_CGO} contains the construction of complex geometric optics solutions for the magnetic Schr\"odinger operator with $L^\infty$ magnetic and electric potentials.  The proof of Theorem \ref{thm_main} is then completed in Section \ref{sec_proof_main}.

\section{Construction of complex geometric optics solutions} 

\label{sec_CGO}

Let $\Omega\subset\R^n$, $n\ge 3$, be a bounded open set.  Following \cite{DKSU_2007, KenSjUhl2007}, we shall use the method of Carleman estimates to construct complex geometric optics solutions for the magnetic Schr\"odinger equation $L_{A,q}u=0$ in $\Omega$,  with $A\in  L^\infty(\Omega,\C^n)$ and $q\in L^\infty(\Omega,\C)$.

Let us start by recalling the Carleman estimate for the semiclassical Laplace operator $-h^2\Delta$ with a gain of two derivatives, established in   \cite{Salo_Tzou_2009}, see also \cite{KenSjUhl2007}. 
Here $h>0$ is a small semiclassical parameter. 
Let $\tilde \Omega$ be an open set in $\R^n$ such that $ \Omega\subset\subset\tilde \Omega$ and let
$\varphi\in C^\infty(\tilde \Omega,\R)$.  Consider the conjugated operator 
\[
P_\varphi=e^{\frac{\varphi}{h}}(-h^2\Delta) e^{-\frac{\varphi}{h}},
\]
with the semiclassical principal symbol
\[
p_\varphi(x,\xi)=\xi^2+2i\nabla \varphi\cdot \xi-|\nabla \varphi|^2, \quad x\in \tilde {\Omega},\quad  \xi\in \R^n. 
\]
We have for $(x,\xi)\in \overline{\Omega}\times \R^n$, $|\xi|\ge C\gg 1$, that $|p_\varphi(x,\xi)|\sim |\xi|^2$ so that $P_\varphi$ is elliptic at infinity, in the semiclassical sense. 
Following \cite{KenSjUhl2007}, we say that $\varphi$ is a limiting Carleman weight for $-h^2\Delta$ in $\tilde \Omega$, if $\nabla \varphi\ne 0$ in $\tilde \Omega$ and the Poisson bracket of $\Re p_\varphi$ and $\Im p_\varphi$ satisfies, 
\[
\{\Re p_\varphi,\Im p_\varphi\}(x,\xi)=0 \quad \textrm{when}\quad p_\varphi(x,\xi)=0, \quad (x,\xi)\in \tilde{\Omega}\times \R^n. 
\]
Examples of limiting Carleman weights  are  linear weights $\varphi(x)=\alpha\cdot x$, $\alpha\in \R^n$, $|\alpha|=1$, and logarithmic weights $\varphi(x)=\log|x-x_0|$,  with $x_0\not\in \tilde \Omega$.  In this paper we shall only use the linear weights. 

Our starting point is the following result due to \cite{Salo_Tzou_2009}.
\begin{prop}
Let $\varphi$ be a limiting Carleman weight for the semiclassical Laplacian on $\tilde \Omega$, and let $\varphi_\varepsilon=\varphi+\frac{h}{2\varepsilon}\varphi^2$.  Then 
for $0<h\ll \varepsilon\ll 1$ and $s\in\R$, we have
\begin{equation}
\label{eq_Carleman_lap}
\frac{h}{\sqrt{\varepsilon}}\|u\|_{H^{s+2}_{\emph{scl}}(\R^n)}\le C\|e^{\varphi_\varepsilon/h}(-h^2\Delta)e^{-\varphi_\varepsilon/h}u\|_{H^s_{\emph{scl}}(\R^n)},
\quad C>0,
\end{equation}
 for all $u\in C^\infty_0(\Omega)$.  
\end{prop}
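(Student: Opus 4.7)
The plan is to combine H\"ormander's semiclassical commutator method with a semiclassical elliptic regularity bound. First, decompose the conjugated operator as $P_{\varphi_\varepsilon}=A_\varepsilon+iB_\varepsilon$, where $A_\varepsilon$ and $B_\varepsilon$ are formally self-adjoint with real principal symbols $a_\varepsilon(x,\xi)=\xi^2-|\nabla\varphi_\varepsilon|^2$ and $b_\varepsilon(x,\xi)=2\nabla\varphi_\varepsilon\cdot\xi$. Integration by parts then yields, for $u\in C_0^\infty(\Omega)$,
\[
\|P_{\varphi_\varepsilon}u\|_{L^2}^2=\|A_\varepsilon u\|_{L^2}^2+\|B_\varepsilon u\|_{L^2}^2+(i[A_\varepsilon,B_\varepsilon]u,u)_{L^2},
\]
so everything reduces to a lower bound on the commutator, whose principal symbol is $h\{a_\varepsilon,b_\varepsilon\}$.

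The crucial computation is that the convexification $\varphi_\varepsilon=\varphi+(h/2\varepsilon)\varphi^2$ generates a positivity gain in this Poisson bracket on the characteristic variety $\{a_\varepsilon=b_\varepsilon=0\}$. A direct calculation, based on $\nabla\varphi_\varepsilon=(1+h\varphi/\varepsilon)\nabla\varphi$, shows that the $h$-independent part of $\{a_\varepsilon,b_\varepsilon\}$ reduces on that variety to the Poisson bracket $\{a,b\}$ of the unperturbed symbols $a=\xi^2-|\nabla\varphi|^2$ and $b=2\nabla\varphi\cdot\xi$ (up to terms of order $h/\varepsilon$), which vanishes by the limiting Carleman weight assumption, leaving the convexification term to provide $\{a_\varepsilon,b_\varepsilon\}\gtrsim h/\varepsilon$ on $\{a_\varepsilon=b_\varepsilon=0\}$ since $|\nabla\varphi|\ge c>0$. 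Away from the characteristic variety, $a_\varepsilon^2+b_\varepsilon^2$ is bounded below, so the terms $\|A_\varepsilon u\|_{L^2}^2+\|B_\varepsilon u\|_{L^2}^2$ supply the needed control there. Patching the two regimes by a sharp G\aa rding-type argument yields the basic $L^2$ Carleman estimate
\[
\frac{h}{\sqrt{\varepsilon}}\|u\|_{L^2(\R^n)}\le C\|P_{\varphi_\varepsilon}u\|_{L^2(\R^n)},\qquad u\in C_0^\infty(\Omega).
\]

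To promote this to a bound with gain of two derivatives, I would use that $P_{\varphi_\varepsilon}$ is semiclassically elliptic at infinity with principal part $-h^2\Delta$, giving the standard elliptic estimate
\[
\|u\|_{H^2_{\emph{scl}}(\R^n)}\le C\bigl(\|P_{\varphi_\varepsilon}u\|_{L^2}+\|u\|_{L^2}\bigr),
\]
and inserting the $L^2$ Carleman bound together with $h\ll\varepsilon\ll 1$ (so $\sqrt{\varepsilon}/h\gg 1$) yields the case $s=0$. For general $s\in\R$, apply the $s=0$ estimate to $\Lambda^s u$ with $\Lambda^s=\langle hD\rangle^s$, absorbing the commutator $[P_{\varphi_\varepsilon},\Lambda^s]$, which is one full power of $h$ smaller than $P_{\varphi_\varepsilon}\Lambda^s$ in the semiclassical calculus and hence controlled by the $h/\sqrt{\varepsilon}$ on the left once $\varepsilon$ is small. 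The main obstacle, I expect, is making the Poisson bracket computation quantitatively sharp enough that the $h/\varepsilon$ positivity on the characteristic variety survives both the subprincipal $O(h^2)$ errors in the commutator expansion and the interpolation with the elliptic regime; a secondary technical point is that $\Lambda^s u$ is not compactly supported in $\Omega$, so the passage to general $s$ requires either first establishing the $s=0$ estimate with $\varphi_\varepsilon$ extended to a neighborhood of $\overline{\Omega}$ (or to all of $\R^n$), or a careful frequency localization argument.
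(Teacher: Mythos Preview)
The paper does not prove this proposition at all: it is quoted verbatim as a known result due to Salo--Tzou \cite{Salo_Tzou_2009} (see also \cite{KenSjUhl2007}), and the authors use it as a black box. So there is no ``paper's own proof'' to compare your proposal against.

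That said, your outline is essentially the argument carried out in \cite{Salo_Tzou_2009} and \cite{KenSjUhl2007}. The decomposition $P_{\varphi_\varepsilon}=A_\varepsilon+iB_\varepsilon$, the commutator identity, and the observation that the convexification $\varphi_\varepsilon=\varphi+\frac{h}{2\varepsilon}\varphi^2$ produces a strictly positive Poisson bracket $\{a_\varepsilon,b_\varepsilon\}\gtrsim h/\varepsilon$ on the joint characteristic set (precisely because the limiting Carleman weight condition makes the unperturbed bracket vanish there) are the core of those proofs. The promotion from $L^2$ to $H^{s+2}_{\textrm{scl}}\to H^s_{\textrm{scl}}$ is also done there, and the compact-support obstruction you flag when applying $\langle hD\rangle^s$ is real; in \cite{Salo_Tzou_2009} it is handled by working with a pseudodifferential formulation of the estimate (essentially proving a G\aa rding-type lower bound for the full symbol in one step rather than first getting $s=0$ and then shifting), which sidesteps the localization issue. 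Your alternative suggestion of extending $\varphi_\varepsilon$ and proving the $s=0$ estimate on a slightly larger set would also work.
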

Here 
\[
\|u\|_{H^s_{\textrm{scl}}(\R^n)}=\|\langle hD \rangle^s u\|_{L^2(\R^n)},\quad \langle\xi  \rangle=(1+|\xi|^2)^{1/2},
\]
is the natural semiclassical norm in the Sobolev space  $H^s(\R^n)$, $s\in\R$.

Next we shall derive a Carleman estimate for the magnetic Schr\"odinger operator $L_{A,q}$ with $A\in L^\infty(\Omega,\C^n)$ and $q\in L^\infty(\Omega,\C)$.  To that end we shall use the estimate \eqref{eq_Carleman_lap} with $s=-1$, and with $\varepsilon>0$ being sufficiently small but fixed, i.e. independent of $h$.  We have the following result. 
 
\begin{prop}
Let $\varphi\in C^\infty(\tilde\Omega,\R)$ be a limiting Carleman weight for the semiclassical Laplacian on $\tilde \Omega$, and assume that $A\in L^\infty(\Omega,\C^n)$ and $q\in L^\infty(\Omega,\C)$. Then 
for $0<h\ll 1$, we have 
\begin{equation}
\label{eq_Carleman_schr}
h\|u\|_{H^{1}_{\emph{scl}}(\R^n)}\le C\|e^{\varphi/h}(h^2L_{A,q})e^{-\varphi/h}u\|_{H^{-1}_{\emph{scl}}(\R^n)},
\end{equation}
 for all $u\in C^\infty_0(\Omega)$.  
\end{prop}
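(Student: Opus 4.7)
The plan is to derive \eqref{eq_Carleman_schr} from the Laplacian Carleman estimate \eqref{eq_Carleman_lap} taken at $s=-1$, treating the $A$- and $q$-terms of $L_{A,q}$ as a perturbation of $-\Delta$. Using the standard identity $e^{\varphi_\varepsilon/h}(hD_j)e^{-\varphi_\varepsilon/h}=hD_j+i\partial_j\varphi_\varepsilon$, I would split
\[
e^{\varphi_\varepsilon/h}(h^2L_{A,q})e^{-\varphi_\varepsilon/h}=e^{\varphi_\varepsilon/h}(-h^2\Delta)e^{-\varphi_\varepsilon/h}+R,
\]
where, for $u\in C_0^\infty(\Omega)$,
\[
Ru = h\,A\cdot (hD)u + h\,(hD)\cdot(Au) + 2ih\,(A\cdot\nabla\varphi_\varepsilon)u + h^2(A^2+q)u.
\]

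The heart of the argument is to show that $R:H^1_{\text{scl}}(\R^n)\to H^{-1}_{\text{scl}}(\R^n)$ has operator norm $O(h)$ with a constant that is uniform in $h$ once $\varepsilon>0$ is fixed. The multiplication pieces $h^2(A^2+q)u$ and $2ih(A\cdot\nabla\varphi_\varepsilon)u$, and also $hA\cdot(hD)u$, are all controlled by $O(h)\|u\|_{H^1_{\text{scl}}(\R^n)}$ directly in $L^2$ and then embedded into $H^{-1}_{\text{scl}}(\R^n)$, using $\|A\|_{L^\infty}$, $\|q\|_{L^\infty}$, and the uniform bound on $\nabla\varphi_\varepsilon$ over $\overline{\Omega}$ (which is itself uniform in $h$ since $\nabla\varphi_\varepsilon=\nabla\varphi+(h/\varepsilon)\varphi\nabla\varphi$ and $h\ll\varepsilon$). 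The delicate piece is $h(hD)\cdot(Au)$, since $A\in L^\infty$ cannot be differentiated. Here the $H^{-1}_{\text{scl}}$ norm on the right of \eqref{eq_Carleman_lap} is exactly what saves the estimate: the symbol of $\langle hD\rangle^{-1}hD_j$ is pointwise bounded by $1$, so $hD_j$ is bounded as a map $L^2\to H^{-1}_{\text{scl}}(\R^n)$ uniformly in $h$, whence
\[
\|h\,(hD_j)(A_j u)\|_{H^{-1}_{\text{scl}}(\R^n)}\le h\|A_j u\|_{L^2(\R^n)}\le h\|A\|_{L^\infty}\|u\|_{L^2(\R^n)}.
\]
This is the main obstacle and the reason the two-derivative gain in the base Carleman estimate is essential for $L^\infty$ magnetic potentials.

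Feeding the bound $\|Ru\|_{H^{-1}_{\text{scl}}(\R^n)}\le C'h\|u\|_{H^1_{\text{scl}}(\R^n)}$ into \eqref{eq_Carleman_lap} at $s=-1$ yields
\[
\frac{h}{\sqrt{\varepsilon}}\|u\|_{H^1_{\text{scl}}(\R^n)}\le C\|e^{\varphi_\varepsilon/h}(h^2L_{A,q})e^{-\varphi_\varepsilon/h}u\|_{H^{-1}_{\text{scl}}(\R^n)}+CC'h\|u\|_{H^1_{\text{scl}}(\R^n)},
\]
with $C,C'$ independent of $h$. I would then fix $\varepsilon>0$ once and for all small enough that $1/\sqrt{\varepsilon}>2CC'$, absorbing the error into the left-hand side and producing the claimed inequality with the weight $\varphi_\varepsilon$. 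To return to the weight $\varphi$, I use $\varphi_\varepsilon-\varphi=(h/2\varepsilon)\varphi^2$, so that $e^{\pm\varphi_\varepsilon/h}=M^{\pm 1}e^{\pm\varphi/h}$ where $M:=e^{\varphi^2/(2\varepsilon)}$ is a smooth, strictly positive, bounded multiplier on $\overline{\Omega}$. Since $\varepsilon$ is now fixed, a direct check (or a routine commutator argument) shows that multiplication by $M^{\pm 1}$ is bounded on both $H^{1}_{\text{scl}}(\R^n)$ and $H^{-1}_{\text{scl}}(\R^n)$ uniformly in $h$. Substituting $u\mapsto M^{-1}u\in C_0^\infty(\Omega)$ in the $\varphi_\varepsilon$-estimate and using $e^{\varphi_\varepsilon/h}(h^2L_{A,q})e^{-\varphi_\varepsilon/h}=M\bigl(e^{\varphi/h}(h^2L_{A,q})e^{-\varphi/h}\bigr)M^{-1}$ then gives \eqref{eq_Carleman_schr}.
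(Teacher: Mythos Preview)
Your proof is correct and follows essentially the same route as the paper: both treat the $A$- and $q$-terms as an $O(h)$ perturbation in $H^{-1}_{\textrm{scl}}$ of the conjugated Laplacian, fix $\varepsilon$ small to absorb, and then pass from $\varphi_\varepsilon$ to $\varphi$ via the smooth multiplier $e^{\varphi^2/(2\varepsilon)}$. The only cosmetic difference is that the paper estimates the delicate term $h(hD)\cdot(Au)$ by the duality characterization of $H^{-1}_{\textrm{scl}}$ (integrating by parts against a test function), whereas you use the equivalent Fourier-side bound $\|hD_j v\|_{H^{-1}_{\textrm{scl}}}\le\|v\|_{L^2}$.
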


\begin{proof}
In order to prove the estimate \eqref{eq_Carleman_schr} it will be convenient to use the following characterization of the semiclassical norm in the Sobolev space $H^{-1}(\R^n)$, 
\begin{equation}
\label{eq_charac_H_-}
\|v\|_{H_{\textrm{scl}}^{-1}(\R^n)}=\sup_{0\ne \psi\in C^\infty_0(\R^n)}\frac{|\langle v,\psi\rangle_{\R^n} |}{\|\psi\|_{H_{\textrm{scl}}^{1}(\R^n)}},
\end{equation}
where $\langle \cdot,\cdot\rangle_{\R^n}$ is the distribution duality on $\R^n$. 

Let $\varphi_\varepsilon=\varphi+\frac{h}{2\varepsilon}\varphi^2$ be the convexified weight with $\varepsilon>0$ such that $0<h\ll \varepsilon\ll 1$, and let $u \in C^\infty_0(\Omega)$.  Then for all $0\ne \psi\in C^\infty_0(\R^n)$, we have 
\begin{align*}
|\langle e^{\varphi_\varepsilon/h} h^2 A\cdot D(e^{-\varphi_\varepsilon/h}u), \psi \rangle_{\R^n}|&\le 
\int_{\R^n}\bigg|hA\cdot \bigg(-u\bigg(1+\frac{h}{\varepsilon}\varphi\bigg)D\varphi+hDu\bigg)\psi\bigg|dx\\
&\le \mathcal{O}(h)\|u\|_{H^1_{\textrm{scl}}(\R^n)}\|\psi\|_{H^1_{\textrm{scl}}(\R^n)}.
\end{align*}
We also obtain that
\begin{align*}
|\langle e^{\varphi_\varepsilon/h} h^2 D\cdot( A e^{-\varphi_\varepsilon/h}u), \psi \rangle_{\R^n}|&\le \int_{\R^n}|h^2A e^{-\varphi_\varepsilon/h}u\cdot D(e^{\varphi_\varepsilon/h}\psi)|dx\\
&\le \mathcal{O}(h)\|u\|_{H^1_{\textrm{scl}}(\R^n)}\|\psi\|_{H^1_{\textrm{scl}}(\R^n)}.
\end{align*} 
Hence, using \eqref{eq_charac_H_-}, we get
\begin{equation}
\label{eq_2_3}
\|e^{\varphi_\varepsilon/h} h^2 A\cdot D(e^{-\varphi_\varepsilon/h}u) + e^{\varphi_\varepsilon/h} h^2 D\cdot( A e^{-\varphi_\varepsilon/h}u) \|_{H_{\textrm{scl}}^{-1}(\R^n)}\le \mathcal{O}(h)\|u\|_{H^1_{\textrm{scl}}(\R^n)}.
\end{equation}
Notice that the implicit constant in \eqref{eq_2_3} only depends on $\|A\|_{L^\infty(\Omega)}$, $\|\varphi\|_{L^\infty(\Omega)}$ and $\|D\varphi\|_{L^\infty(\Omega)}$.  Now choosing $\varepsilon >0$  sufficiently small but fixed, i.e. independent of $h$, we conclude from the estimate \eqref{eq_Carleman_lap} with $s=-1$ and the estimate \eqref{eq_2_3} that for all $h>0$ small enough, 
\begin{equation}
\label{eq_2_4}
\begin{aligned}
\|e^{\varphi_\varepsilon/h}(-h^2\Delta)e^{-\varphi_\varepsilon/h}u&+e^{\varphi_\varepsilon/h} h^2 A\cdot D(e^{-\varphi_\varepsilon/h}u) + e^{\varphi_\varepsilon/h} h^2 D\cdot( A e^{-\varphi_\varepsilon/h}u) \|_{H_{\textrm{scl}}^{-1}(\R^n)}\\
&\ge \frac{h}{C}\|u\|_{H^1_{\textrm{scl}}(\R^n)},\quad C>0. 
\end{aligned}
\end{equation}
Furthermore, the estimate
\[
\|h^2(A^2+q)u\|_{H^{-1}_{\textrm{scl}}(\R^n)}\le \mathcal{O}(h^2)\|u\|_{H^1_{\textrm{scl}}(\R^n)} 
\]
and the estimate \eqref{eq_2_4} imply that for all $h>0$ small enough, 
\[
\|e^{\varphi_\varepsilon/h}(h^2L_{A,q})e^{-\varphi_\varepsilon/h}u\|_{H_{\textrm{scl}}^{-1}(\R^n)}\ge \frac{h}{C}\|u\|_{H^1_{\textrm{scl}}(\R^n)},\quad C>0. 
\]
Using that 
\[
e^{-\varphi_\varepsilon/h}u=e^{-\varphi/h}e^{-\varphi^2/(2\varepsilon)}u,
\]
we obtain \eqref{eq_Carleman_schr}. The proof is complete.

\end{proof}

Let $\varphi\in C^\infty(\tilde\Omega,\R)$ be a limiting Carleman weight for $-h^2\Delta$ and set $L_\varphi=e^{\varphi/h}(h^2L_{A,q})e^{-\varphi/h}$.
Then we have
\[
\langle L_\varphi u,\overline{v}\rangle_{\Omega}= \langle  u,\overline{ L_\varphi^* v}\rangle_{\Omega}, \quad u,v\in C^\infty_0(\Omega),
\]
where  $L_\varphi^*=e^{-\varphi/h}(h^2L_{\overline{A},\overline{q}})e^{\varphi/h}$ is the formal adjoint of $L_\varphi$ and $\langle \cdot,\cdot\rangle_{\Omega}$ is the distribution duality on $\Omega$. We have
\[
L_\varphi^*:C^\infty_0(\Omega)\to H^{-1}(\R^n)\cap\mathcal{E}'(\Omega)
\]
is bounded, and the estimate \eqref{eq_Carleman_schr} holds for $L_\varphi^*$, since $-\varphi$ is a limiting Carleman weight as well.

To construct complex geometric optics solutions for the magnetic Schr\"odinger operator we  need to convert the Carleman estimate  \eqref{eq_Carleman_schr} for $L_\varphi^*$ into 
the following solvability result. The proof is essentially well-known, and is included here for the convenience of the reader. We shall write
\begin{align*}
&\|u\|_{H^1_{\textrm{scl}}(\Omega)}^2=\|u\|_{L^2(\Omega)}^2+\|hDu\|_{L^2(\Omega)}^2,\\
&\|v\|_{H^{-1}_{\textrm{scl}}(\Omega)}=\sup_{0\ne \psi\in C_0^\infty(\Omega)}\frac{|\langle v,\psi\rangle_{\Omega}|}{\|\psi\|_{H^1_{\textrm{scl}}(\Omega)}}.
\end{align*}

\begin{prop}
\label{prop_solvability}
Let $A\in L^\infty(\Omega,\C^n)$, $q\in L^\infty(\Omega,\C)$, and let $\varphi$ be a limiting Carleman weight for the semiclassical Laplacian on $\tilde \Omega$. If $h>0$ is small enough, then for any $v\in H^{-1}(\Omega)$, there is a solution $u\in H^1(\Omega)$ of the equation
\[
e^{\varphi/h}(h^2L_{A,q})e^{-\varphi/h}u=v\quad\textrm{in}\quad \Omega,
\]
which satisfies
\[
\|u\|_{H^1_{\emph{scl}}(\Omega)}\le \frac{C}{h}\|v\|_{H^{-1}_{\emph{scl}}(\Omega)}.
\]
\end{prop}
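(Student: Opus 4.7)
The plan is to convert the Carleman estimate \eqref{eq_Carleman_schr} into existence of solutions via the standard Hahn--Banach/duality argument. I would first observe that the limiting Carleman weight condition is symmetric under $\varphi\leftrightarrow-\varphi$ (both $\nabla\varphi\neq 0$ and $\{\Re p_\varphi,\Im p_\varphi\}=0$ on the characteristic variety are preserved), so \eqref{eq_Carleman_schr} applies equally to the formal adjoint $L_\varphi^*=e^{-\varphi/h}(h^2 L_{\overline A,\overline q})e^{\varphi/h}$:
\[
h\|w\|_{H^1_{\emph{scl}}(\R^n)}\le C\|L_\varphi^* w\|_{H^{-1}_{\emph{scl}}(\R^n)},\qquad w\in C^\infty_0(\Omega),
\]
and in particular $L_\varphi^*$ is injective on $C^\infty_0(\Omega)$.

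Given $v\in H^{-1}(\Omega)$, define a linear functional $\ell$ on the subspace $W:=L_\varphi^*(C^\infty_0(\Omega))$ of $H^{-1}_{\emph{scl}}(\Omega)$ by $\ell(L_\varphi^* w):=\langle w,v\rangle_\Omega$, which is well defined by injectivity of $L_\varphi^*$. The definition of the semiclassical $H^{-1}(\Omega)$ norm combined with the adjoint Carleman estimate gives
\[
|\ell(L_\varphi^* w)|\le \|v\|_{H^{-1}_{\emph{scl}}(\Omega)}\|w\|_{H^1_{\emph{scl}}(\Omega)}\le \frac{C}{h}\|v\|_{H^{-1}_{\emph{scl}}(\Omega)}\|L_\varphi^* w\|_{H^{-1}_{\emph{scl}}(\Omega)},
\]
so $\ell$ is bounded on $W$ and extends by Hahn--Banach to a continuous linear functional on all of $H^{-1}_{\emph{scl}}(\Omega)$ with the same norm bound $(C/h)\|v\|_{H^{-1}_{\emph{scl}}(\Omega)}$.

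Since $H^{-1}_{\emph{scl}}(\Omega)$ is defined by duality with $C^\infty_0(\Omega)$ in the semiclassical $H^1$ norm, its continuous dual is naturally identified with $H^1_0(\Omega)$ equipped with $\|\cdot\|_{H^1_{\emph{scl}}(\Omega)}$. Hence the Riesz representation theorem produces a unique $u\in H^1_0(\Omega)\subset H^1(\Omega)$ such that $\ell(f)=\langle f,u\rangle_\Omega$ for every $f\in H^{-1}_{\emph{scl}}(\Omega)$ and $\|u\|_{H^1_{\emph{scl}}(\Omega)}\le (C/h)\|v\|_{H^{-1}_{\emph{scl}}(\Omega)}$. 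Specialising $f=L_\varphi^* w$ for $w\in C^\infty_0(\Omega)$ yields $\langle L_\varphi^* w,u\rangle_\Omega=\langle w,v\rangle_\Omega$, which by the defining relation of the formal adjoint is precisely the distributional equation $L_\varphi u=v$ in $\Omega$.

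The only genuinely delicate point is bookkeeping: one has to align the (possibly sesquilinear) pairing used to define the formal adjoint with the pairing used in the $H^{-1}$ norm, and verify that the $H^{-1}_{\emph{scl}}$ dual is indeed represented by an honest $H^1_0$ function with the quantitative bound, rather than by an abstract bidual element. Once these conventions are set up consistently, the abstract Hahn--Banach plus Riesz argument delivers both the solvability and the norm estimate essentially for free.
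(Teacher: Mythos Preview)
Your overall strategy---Carleman estimate for the adjoint, Hahn--Banach, Riesz---is exactly the paper's. There is, however, a genuine gap in your chain of inequalities, and it is not merely bookkeeping.

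You bound $\ell$ with respect to the $H^{-1}_{\textrm{scl}}(\Omega)$ norm:
\[
|\ell(L_\varphi^* w)|\le \frac{C}{h}\|v\|_{H^{-1}_{\textrm{scl}}(\Omega)}\|L_\varphi^* w\|_{H^{-1}_{\textrm{scl}}(\Omega)}.
\]
But the Carleman estimate \eqref{eq_Carleman_schr} controls $\|L_\varphi^* w\|_{H^{-1}_{\textrm{scl}}(\R^n)}$ from below, not $\|L_\varphi^* w\|_{H^{-1}_{\textrm{scl}}(\Omega)}$. For a compactly supported distribution $f\in\mathcal{E}'(\Omega)\cap H^{-1}(\R^n)$ the trivial inequality goes the wrong way, $\|f\|_{H^{-1}_{\textrm{scl}}(\Omega)}\le\|f\|_{H^{-1}_{\textrm{scl}}(\R^n)}$, because the supremum defining the $\Omega$-norm is over the smaller class $C^\infty_0(\Omega)$. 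There is no uniform reverse inequality: if $\hbox{supp }(f)$ approaches $\partial\Omega$, any cutoff $\chi\in C^\infty_0(\Omega)$ equal to $1$ near $\hbox{supp }(f)$ has $\|\nabla\chi\|_{L^\infty}\to\infty$, and one can produce explicit sequences $f_\varepsilon$ with $\|f_\varepsilon\|_{H^{-1}(\Omega)}/\|f_\varepsilon\|_{H^{-1}(\R^n)}\to 0$. Since $w\in C^\infty_0(\Omega)$ is arbitrary, the support of $L_\varphi^* w$ is not uniformly separated from $\partial\Omega$, so your displayed inequality is unjustified.

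The repair is simple and is precisely what the paper does: view $W=L_\varphi^*(C^\infty_0(\Omega))$ as a subspace of $H^{-1}_{\textrm{scl}}(\R^n)$, extend $\ell$ by Hahn--Banach there, and represent the extension by some $u\in H^1(\R^n)$ via the duality $(H^{-1}(\R^n))'\cong H^1(\R^n)$. You then restrict $u$ to $\Omega$. You lose the conclusion $u\in H^1_0(\Omega)$, but the proposition only claims $u\in H^1(\Omega)$, which is all that is needed downstream.
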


\begin{proof}
Let $v\in H^{-1}(\Omega)$ and let us consider the following complex linear functional,
\[
L: L_\varphi^* C_0^\infty(\Omega)\to \C, \quad L_\varphi^* w \mapsto \langle w, \overline{v}\rangle_\Omega. 
\]
By the Carleman estimate \eqref{eq_Carleman_schr} for $L_\varphi^*$, the map $L$ is well-defined.  Let $w\in C_0^\infty(\Omega)$. Then we have
\begin{align*}
|L(L_\varphi^* w)|=|\langle w, \overline{v}\rangle_\Omega|&\le \|w\|_{H^1_{\textrm{scl}}(\R^n)}\|v\|_{H^{-1}_{\textrm{scl}}(\Omega)}\\
&\le \frac{C}{h}\|v\|_{H^{-1}_{\textrm{scl}}(\Omega)}\|L_\varphi^* w\|_{H^{-1}_{\textrm{scl}}(\R^n)}.
\end{align*}
By the Hahn-Banach theorem, we may extend $L$ to a linear continuous functional $\tilde L$ on $H^{-1}(\R^n)$, without increasing its norm. 
By the Riesz representation theorem, there exists $u\in H^1(\R^n)$ such that for all $\psi\in H^{-1}(\R^n)$,
\[
\tilde L(\psi)=\langle \psi,\overline{u}\rangle_{\R^n}, \quad \textrm{and}\quad \|u\|_{H^1_{\textrm{scl}}(\R^n)}\le \frac{C}{h}\|v\|_{H^{-1}_{\textrm{scl}}(\Omega)}. 
\]
Let us now show that $L_\varphi u=v$ in $\Omega$. To that end, let $w\in C_0^\infty(\Omega)$. Then 
\[
\langle L_\varphi u,\overline{w}\rangle_\Omega=\langle u,\overline{L_\varphi^*w}\rangle_{\R^n} =\overline{\tilde L(L_\varphi^*w)}=\overline{\langle w,\overline{v}\rangle_\Omega}=\langle v,\overline{w}\rangle_\Omega. 
\]
The proof is complete. 
\end{proof}

Let $A\in L^\infty(\Omega,\C^n)$. We shall extend $A$ to $\R^n$ by defining it to be zero in  $\R^n\setminus\Omega$,  and denote this extension by the same letter. Then $A\in (L^\infty\cap\mathcal{E}')(\R^n,\C^n)\subset L^p(\R^n,\C^n)$, $1\le p\le \infty$. 

Let $\Psi_\tau(x)=\tau^{-n}\Psi(x/\tau)$, $\tau>0$, be the usual mollifier with $\Psi\in C^\infty_0(\R^n)$, $0\le \Psi\le 1$, and 
$\int \Psi dx=1$.  Then $A^\sharp=A*\Psi_\tau\in C_0^\infty(\R^n,\C^n)$ and 
\begin{equation}
\label{eq_flat_est}
\|A-A^\sharp\|_{L^2(\R^n)}=o(1), \quad \tau\to 0.
\end{equation}
A direct computation shows that 
\begin{equation}
\label{eq_flat_est_2}
 \|\p^\alpha A^\sharp\|_{L^\infty(\R^n)}=\mathcal{O}(\tau^{-|\alpha|}),\quad \tau\to 0, \quad \textrm{for all}\quad \alpha,\quad |\alpha|\ge 0.
\end{equation}

We shall now construct complex geometric optics solutions for the magnetic Schr\"odinger equation 
\begin{equation}
\label{eq_2_6}
L_{A,q}u=0\quad \textrm{in} \quad \Omega,
\end{equation} 
with $A\in L^\infty(\Omega,\C^n)$ and $q\in L^\infty(\Omega,\C)$, using the solvability result of Proposition \ref{prop_solvability} and 
the approximation \eqref{eq_flat_est}. Complex geometric optics solutions are solutions of the form,
\begin{equation}
\label{eq_2_7}
u(x,\zeta;h)=e^{x\cdot\zeta/h} (a(x,\zeta;h)+r(x,\zeta;h)),
\end{equation}
where $\zeta\in\C^n$, $\zeta\cdot\zeta=0$, $|\zeta|\sim 1$,  $a$ is a smooth amplitude, $r$ is a correction term, and $h>0$ is a small parameter.

It will be convenient to introduce the following bounded operator, 
\[
m_A: H^1(\Omega)\to H^{-1}(\Omega),\quad  m_A(u)=D\cdot(A u),
\]
where the distribution $m_A(u)$ is given by
\[
\langle m_A(u),v \rangle_\Omega=-\int_{\Omega} Au\cdot Dv dx, \quad v\in C_0^\infty(\Omega). 
\]
Let us conjugate $h^2L_{A,q}$ by $e^{x\cdot\zeta/h}$. First, let us compute $e^{-x\cdot\zeta/h}\circ h^2m_A \circ e^{x\cdot \zeta/h}$. When $u\in H^1(\Omega)$ and $v\in C^\infty_0(\Omega)$, we get 
\begin{align*}
\langle e^{-x\cdot\zeta/h}h^2m_A (e^{x\cdot \zeta/h}u),v\rangle_\Omega &=-\int_{\Omega} h^2 A e^{x\cdot \zeta/h}u \cdot D(e^{-x\cdot\zeta/h} v)dx\\
&=-\int_\Omega(hi\zeta\cdot Auv+h^2Au\cdot Dv)dx,
\end{align*}
and therefore, 
\[
e^{-x\cdot\zeta/h}\circ h^2m_A \circ e^{x\cdot \zeta/h}= -hi\zeta\cdot A+h^2 m_A. 
\]
Furthermore, we obtain that 
\begin{align*}
e^{-x\cdot\zeta/h}\circ (-h^2\Delta) \circ e^{x\cdot \zeta/h}=-h^2\Delta -2ih \zeta\cdot D,\\
e^{-x\cdot\zeta/h}\circ h^2(A\cdot D)\circ  e^{x\cdot \zeta/h}=h^2A\cdot D- hi\zeta\cdot A. 
\end{align*}
Hence, we have
\begin{equation}
\label{eq_2_8}
e^{-x\cdot\zeta/h}  \circ h^2 L_{A,q} \circ e^{x\cdot \zeta/h}=-h^2\Delta -2ih \zeta\cdot D +h^2A\cdot D- 2 hi\zeta\cdot A+ h^2 m_A + h^2(A^2+q). 
\end{equation}
We shall consider $\zeta$ depending slightly on $h$, i.e. $\zeta=\zeta_0+\zeta_1$ with $\zeta_0$ being independent of $h$ and $\zeta_1=\mathcal{O}(h)$ as $h\to 0$.  We also assume that $|\Re\zeta_0|=|\Im\zeta_0|=1$. Then we write \eqref{eq_2_8} as follows,
\begin{align*}
e^{-x\cdot\zeta/h} \circ h^2  L_{A,q}\circ  e^{x\cdot \zeta/h}= &  -h^2\Delta -2ih \zeta_0\cdot D -2ih \zeta_1\cdot D +h^2A\cdot D- 2 hi\zeta_0\cdot A^\sharp\\
&- 2 hi\zeta_0\cdot (A-A^\sharp) - 2 hi\zeta_1\cdot A+ h^2 m_A + h^2(A^2+q). 
\end{align*}

In order that \eqref{eq_2_7} be a solution of \eqref{eq_2_6}, we require that 
\begin{equation}
\label{eq_2_9}
\zeta_0\cdot Da +\zeta_0\cdot A^\sharp a=0\quad\textrm{in}\quad \R^n,
\end{equation}
and 
\begin{equation}
\label{eq_2_10}
\begin{aligned}
e^{-x\cdot\zeta/h} h^2 L_{A,q}& e^{x\cdot \zeta/h}r= -(- h^2\Delta a  +h^2A\cdot D a + h^2 m_A (a)+ h^2(A^2+q)a)\\
& +2ih \zeta_1\cdot Da  + 2 hi\zeta_0\cdot (A-A^\sharp)a +2 hi\zeta_1\cdot Aa=:g \quad\textrm{in}\quad \Omega. 
\end{aligned}
\end{equation}
The equation \eqref{eq_2_9} is the first transport equation and  one looks for its solution in the form $a=e^{\Phi^\sharp}$, where $\Phi^\sharp$ solves the equation
\begin{equation}
\label{eq_2_9_phi}
\zeta_0\cdot\nabla \Phi^\sharp + i\zeta_0\cdot A^\sharp=0\quad \textrm{in}\quad \R^n. 
\end{equation}
As $\zeta_0\cdot\zeta_0=0$ and $|\Re\zeta_0|=|\Im\zeta_0|=1$,  the operator   $N_{\zeta_0}:=\zeta_0\cdot\nabla$ is the $\bar\p$--operator in suitable linear coordinates. Let us introduce an inverse operator defined by
\[
(N_{\zeta_0}^{-1}f)(x) =\frac{1}{2\pi}\int_{\R^2} \frac{f(x-y_1\Re\zeta_0 -y_2\Im\zeta_0)}{y_1+iy_2}dy_1dy_2,\quad f\in C_0(\R^n).
\]
We have the following result, see \cite[Lemma 4.6]{Salo_diss}. 
\begin{lem}
\label{lem_salo_1}
 Let $f\in W^{k,\infty}(\R^n)$, $k\ge 0$, with $\emph{\supp}(f)\subset B(0,R)$. Then $\Phi=N_{\zeta_0}^{-1} f\in W^{k,\infty}(\R^n)$ satisfies $N_{\zeta_0}\Phi=f$ in $\R^n$, and we have 
\begin{equation}
\label{eq_salo_1}
\|\Phi\|_{W^{k,\infty}(\R^n)}\le C\|f\|_{W^{k,\infty}(\R^n)}, 
\end{equation}
where $C=C(R)$.  If $f\in C_0(\R^n)$, then $\Phi\in C(\R^n)$. 
\end{lem}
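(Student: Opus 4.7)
The plan is to recognize $N_{\zeta_0}^{-1}$ as a one-variable Cauchy transform acting slicewise in the remaining $n-2$ coordinates. The conditions $\zeta_0\cdot\zeta_0=0$ and $|\Re\zeta_0|=|\Im\zeta_0|=1$ force $\Re\zeta_0$ and $\Im\zeta_0$ to be orthonormal in $\R^n$ (since $\zeta_0\cdot\zeta_0=|\Re\zeta_0|^2-|\Im\zeta_0|^2+2i\Re\zeta_0\cdot\Im\zeta_0$), so completing them to an orthonormal basis produces linear coordinates $x'$ in which $\Re\zeta_0=e_1$ and $\Im\zeta_0=e_2$. In these coordinates $N_{\zeta_0}=\p_{x'_1}+i\p_{x'_2}=2\p_{\bar z}$ with $z=x'_1+ix'_2$, and after the substitution $\omega=z-(y_1+iy_2)$ in the defining integral one obtains
\[
\Phi(x')=\frac{1}{2\pi}\int_{\C}\frac{f(\omega_1,\omega_2,x'_3,\dots,x'_n)}{z-\omega}\,dA(\omega),
\]
exhibiting $\Phi(\cdot,x'_3,\dots,x'_n)$ as $\tfrac12$ times the classical Cauchy transform of $f(\cdot,x'_3,\dots,x'_n)$.

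Next I would invoke the classical distributional identity $\p_{\bar z}(1/(\pi z))=\delta_0$ on $\R^2$, applied with $x'_3,\dots,x'_n$ as parameters, to deduce $\p_{\bar z}\Phi=\tfrac12 f$ and hence $N_{\zeta_0}\Phi=f$ in $\mathcal{D}'(\R^n)$. For the $W^{k,\infty}$ bound the key remark is that $N_{\zeta_0}^{-1}$ is convolution in $(x'_1,x'_2)$ against the fixed kernel $1/(2\pi(y_1+iy_2))$, and so commutes with every partial derivative: $\p^\alpha N_{\zeta_0}^{-1}f=N_{\zeta_0}^{-1}(\p^\alpha f)$ whenever $\p^\alpha f\in L^\infty$. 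Thus it suffices to treat $k=0$. Since $\supp f\subset B(0,R)$, the integrand in $\Phi(x')$ is supported where $(y_1,y_2)$ lies in a planar disk of radius at most $R$, so a polar-coordinate computation yields
\[
|\Phi(x')|\le \frac{\|f\|_{L^\infty}}{2\pi}\int_{|y|\le 3R}\frac{dy_1\,dy_2}{|y|}=C(R)\,\|f\|_{L^\infty},
\]
after splitting the $(x'_1,x'_2)$-plane into the regions $|(x'_1,x'_2)|\le 2R$ and $|(x'_1,x'_2)|\ge 2R$, on the latter of which the kernel is bounded by $1/R$.

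Finally, when $f\in C_0(\R^n)$ the continuity of $\Phi$ follows from the dominated convergence theorem applied to the integral formula, using uniform continuity of $f$ and the local $L^1$-integrability of $1/|y|$ on $\R^2$. The main subtlety is justifying the distributional identity $N_{\zeta_0}\Phi=f$ at the minimal regularity $f\in W^{k,\infty}\cap\mathcal{E}'$, but once the Cauchy-transform reformulation is set up this reduces to the standard planar fundamental-solution identity, applied slicewise in the parameters $x'_3,\dots,x'_n$.
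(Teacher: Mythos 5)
The paper does not actually prove this lemma; it is cited verbatim from Salo's dissertation \cite[Lemma 4.6]{Salo_diss}. So there is no in-paper proof to compare against, and your proposal must be judged on its own.

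Your argument is correct and is essentially the standard proof of this Cauchy-transform lemma. The reduction of $N_{\zeta_0}$ to $2\p_{\bar z}$ via the orthonormality of $\Re\zeta_0,\Im\zeta_0$ (which does follow from $\zeta_0\cdot\zeta_0=0$ and $|\Re\zeta_0|=|\Im\zeta_0|=1$), the identification of $N_{\zeta_0}^{-1}$ as $\tfrac12$ the two-dimensional solid Cauchy transform acting slicewise, the use of $\p_{\bar z}(1/\pi z)=\delta_0$, and the $k=0$ kernel estimate by splitting according to whether $|(x_1',x_2')|\lessgtr 2R$ are all sound; the constant visibly depends only on $R$ (and $n,k$), not on $\zeta_0$. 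Two small points deserve a sentence each if this were to be written out in full. First, the statement ``commutes with every partial derivative'' is a partial convolution in only two of the $n$ variables; the identity $\p^\alpha N_{\zeta_0}^{-1}f = N_{\zeta_0}^{-1}\p^\alpha f$ should be checked by testing against $\psi\in C_0^\infty(\R^n)$ and applying Fubini (legitimate since $1/(y_1+iy_2)$ is locally integrable on $\R^2$ and $f$, $\psi$ are bounded with compact support), rather than invoked as a fact about convolutions with a Schwartz kernel. Second, when you verify $N_{\zeta_0}\Phi=f$ distributionally ``slicewise in $x_3',\dots,x_n'$,'' one should again reduce to Fubini: pair $\Phi$ against a test function, integrate first in $(x_1',x_2')$ where the planar fundamental-solution identity applies, and then in the remaining variables. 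With those routine justifications added, the proof is complete and, as far as one can tell, coincides with the argument in the cited source.
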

Thanks to Lemma \ref{lem_salo_1},  the function $\Phi^\sharp(x,\zeta_0; \tau):=N_{\zeta_0}^{-1}(-i\zeta_0\cdot A^\sharp)\in C^\infty(\R^n)$ satisfies the equation \eqref{eq_2_9_phi}. 
Furthermore, the estimates \eqref{eq_flat_est_2} and \eqref{eq_salo_1} imply that 
\begin{equation}
\label{eq_ampl_est}
\|\p^\alpha \Phi^\sharp \|_{L^\infty(\R^n)}\le C_\alpha \tau^{-|\alpha|}, \quad \textrm{for all}\quad \alpha, \quad |\alpha|\ge 0.   
\end{equation}

Owing to \cite[Lemma 3.1]{Syl_Uhl_1987}, we have the following result, where we use the norms
\[
\|f\|_{L^2_\delta(\R^n)}^2=\int_{\R^n}(1+|x|^2)^{\delta}|f(x)|^2dx. 
\]
\begin{lem}
\label{lem_SU}
Let $-1<\delta<0$ and let $f\in L^2_{\delta+1}(\R^n)$. Then there exists a constant $C>0$, independent of $\zeta_0$, such that 
\[
\|N^{-1}_{\zeta_0} f\|_{L^2_\delta(\R^n)}\le C\|f\|_{L^2_{\delta+1}(\R^n)}.
\]
\end{lem}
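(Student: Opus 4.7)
The plan is to reduce the claim to a two-dimensional weighted inequality for the Cauchy transform and then dispatch the latter via a Pitt-type Fourier-analytic estimate, in the spirit of \cite{Syl_Uhl_1987}.

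Since $\zeta_0\cdot\zeta_0=0$ and $|\Re\zeta_0|=|\Im\zeta_0|=1$, the real and imaginary parts of $\zeta_0$ form an orthonormal pair in $\R^n$. After an orthogonal change of coordinates we may assume $\zeta_0=e_1+ie_2$. Writing $x=(x_{12},x')$ with $x_{12}=(x_1,x_2)\in\R^2$ and $x'\in\R^{n-2}$, and setting $z=x_1+ix_2$, the operator $N_{\zeta_0}=\partial_{x_1}+i\partial_{x_2}=2\bar\partial_z$ involves only the first two variables, and the integral formula defining $N_{\zeta_0}^{-1}$ becomes convolution in $x_{12}$ with the Cauchy kernel $1/(\pi z)$, tensored with the identity in $x'$. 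The orthogonal change of variables preserves both $|x|$ and all $L^2$ norms, so uniformity of the final constant with respect to $\zeta_0$ is automatic.

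The core step is the two-dimensional weighted estimate with a parameter $R\ge 1$:
\[
\int_{\R^2}|(\bar\partial^{-1}g)(y)|^2(R^2+|y|^2)^\delta\,dy\le C_\delta\int_{\R^2}|g(y)|^2(R^2+|y|^2)^{\delta+1}\,dy,\qquad -1<\delta<0,
\]
with $C_\delta$ independent of $R$. Uniformity in $R$ is immediate by rescaling $y=R\tilde y$: since the Cauchy kernel is homogeneous of degree $-1$, one has $\bar\partial^{-1}g(R\tilde y)=R\,\bar\partial^{-1}\tilde g(\tilde y)$ with $\tilde g(\tilde y):=g(R\tilde y)$, and a direct bookkeeping of the scaling factors reduces the estimate to the case $R=1$. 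Once this is in hand, I would apply it for each fixed $x'\in\R^{n-2}$ with $R(x'):=(1+|x'|^2)^{1/2}\ge 1$, observing the pointwise identity $(1+|x|^2)^\alpha=(R(x')^2+|x_{12}|^2)^\alpha$, and then integrate in $x'$ to obtain the stated $\R^n$ inequality.

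The main obstacle is the weighted Cauchy-transform estimate at $R=1$. Via Plancherel it amounts to controlling the Fourier multiplier $|\xi|^{-1}$ between suitable weighted Sobolev spaces of negative order in $\R^2$, which is a Pitt-type inequality. The symbol $|\xi|^{-1}$ lies precisely on the critical Hardy--Littlewood--Sobolev line in dimension two, so no unweighted $L^2\to L^2$ bound holds; the range $-1<\delta<0$ is exactly what is needed to absorb both the singularity of the symbol at $\xi=0$ and its slow decay at infinity. This is the $\bar\partial$-counterpart of \cite[Lemma 3.1]{Syl_Uhl_1987} and is proved by Fourier analysis in the same spirit. Once this ingredient is secured, the reduction to $\R^n$ and the uniformity in the scale parameter $R$ are essentially routine scaling and separation-of-variables arguments.
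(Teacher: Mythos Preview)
The paper does not actually prove this lemma: it merely records it as a consequence of \cite[Lemma~3.1]{Syl_Uhl_1987} and moves on. Your proposal therefore goes beyond what the paper does, and the outline you give is the standard way of unpacking that citation. The rotation to $\zeta_0=e_1+ie_2$, the slicing in $x'\in\R^{n-2}$ together with the identity $(1+|x|^2)=R(x')^2+|x_{12}|^2$, and the scaling that reduces the two-dimensional weighted Cauchy-transform estimate with parameter $R\ge 1$ to the case $R=1$ are all correct; the homogeneity bookkeeping you indicate produces the common factor $R^{2\delta+4}$ on both sides, so the constant is indeed uniform in $R$ and hence in $\zeta_0$. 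What remains is the $R=1$ two-dimensional weighted bound for the multiplier $(\xi_1+i\xi_2)^{-1}$, which is exactly the $\bar\partial$-analogue of the core estimate behind \cite[Lemma~3.1]{Syl_Uhl_1987}; you correctly identify that the restriction $-1<\delta<0$ is what balances the singularity of the symbol at the origin against its lack of decay at infinity. In short, your route is precisely the one the paper's citation points to, with the reduction spelled out that the paper leaves implicit.
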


Setting $\Phi(\cdot,\zeta_0):=N_{\zeta_0}^{-1}(-i\zeta_0\cdot A)\in L^\infty(\R^n)$, it follows from Lemma \ref{lem_SU} and the estimate \eqref{eq_flat_est} that $\Phi^\sharp (\cdot,\zeta_0;\tau)$ converges to $\Phi(\cdot,\zeta_0)$ in $L^2_{\textrm{loc}}(\R^n)$ as $\tau\to 0$.

Let us turn now to the equation \eqref{eq_2_10}. First notice that the right hand side $g$Ê of \eqref{eq_2_10} belongs to $H^{-1}(\Omega)$ and we would like to  estimate $\|g\|_{H^{-1}_{\textrm{scl}}(\Omega)}$.  To that end,  let $0\ne \psi\in C_0^\infty(\Omega)$. Then using \eqref{eq_ampl_est} and the fact that $\zeta_1=\mathcal{O}(h)$, we get by the Cauchy--Schwarz inequality, 
\begin{align*}
&|\langle h^2\Delta a, \psi \rangle_\Omega|\le \mathcal{O} (h^2/\tau^{2}) \|\psi\|_{L^2(\Omega)}\le \mathcal{O} (h^2/\tau^{2}) \|\psi\|_{H^{1}_{\textrm{scl}}(\Omega)}, \\
&|\langle h^2 A\cdot Da,\psi\rangle_\Omega|\le \mathcal{O} (h^2/\tau) \|\psi\|_{H^{1}_{\textrm{scl}}(\Omega)},\\
& |\langle 2ih\zeta_1\cdot Da,\psi\rangle_\Omega|\le \mathcal{O} (h^2/\tau) \|\psi\|_{H^{1}_{\textrm{scl}}(\Omega)},\\
& |\langle 2hi\zeta_1\cdot Aa,\psi \rangle_\Omega|\le \mathcal{O} (h^2) \|\psi\|_{H^{1}_{\textrm{scl}}(\Omega)}.
\end{align*}
Using \eqref{eq_flat_est} and  \eqref{eq_ampl_est}, we have
\begin{align*}
|\langle 2hi\zeta_0 \cdot(A-A^\sharp)a,\psi\rangle_\Omega |&\le \mathcal{O}(h)\|a\|_{L^\infty(\R^n)}\|A-A^\sharp\|_{L^2(\Omega)}\|\psi\|_{L^2(\Omega)}\\
&\le \mathcal{O}(h)o_{\tau\to 0}(1) \|\psi\|_{H^1_{\textrm{scl}}(\Omega)}.
\end{align*}
With the help of  \eqref{eq_flat_est}, \eqref{eq_flat_est_2}, and  \eqref{eq_ampl_est}, we obtain that 
\begin{align*}
|\langle h^2m_A(a),& \psi \rangle_\Omega|\le \bigg| \int_\Omega h^2 A^\sharp a\cdot D\psi dx\bigg| + \bigg|\int_\Omega h^2 (A-A^\sharp) a\cdot D\psi dx\bigg|\\
&\le \bigg| \int_\Omega h^2 (D\cdot (A^\sharp a)) \psi dx\bigg| +\mathcal{O}(h)\|A-A^\sharp\|_{L^2(\Omega)}\|hD\psi\|_{L^2(\Omega)}\\
&\le (\mathcal{O}(h^2/\tau) +\mathcal{O}(h)o_{\tau\to 0}(1)) \|\psi\|_{H^1_{\textrm{scl}}(\Omega)}. 
\end{align*}
We also have $\|h^2(A^2+q)a\|_{L^2(\Omega)}\le \mathcal{O}(h^2)$. Thus, from the above estimates, we conclude that 
\[
\|g\|_{H^{-1}_{\textrm{scl}}(\Omega)}\le  \mathcal{O} (h^2/\tau^{2}) + \mathcal{O}(h)o_{\tau\to 0}(1).
\]
Choosing now $\tau=h^\sigma$ with some $\sigma$,  $0<\sigma<1/2$, we get
\begin{equation}
\label{eq_2_11}
\|g\|_{H^{-1}_{\textrm{scl}}(\Omega)}=o(h) \quad \textrm{as}\quad h\to 0. 
\end{equation}
Thanks to Proposition \ref{prop_solvability} and \eqref{eq_2_11}, for $h>0$ small enough, there exists a solution $r\in H^1(\Omega)$ of \eqref{eq_2_10} such that $\|r\|_{H^1_{\textrm{scl}}(\Omega)}=o(1)$ as $h\to 0$. 

The discussion led in this section can be summarized  in the following proposition. 
\begin{prop}
\label{prop_cgo_solutions}
Let $\Omega\subset \R^n$, $n\ge 3$, be a bounded open set.  Let $A\in L^\infty(\Omega,\C^n)$, $q\in L^\infty(\Omega,\C)$, and let $\zeta\in \C^n$ be such that $\zeta\cdot\zeta=0$, $\zeta=\zeta_0+\zeta_1$ with $\zeta_0$ being independent of $h>0$, $|\emph{\Re}\zeta_0|=|\emph{\Im} \zeta_0|=1$, and $\zeta_1=\mathcal{O}(h)$ as $h\to 0$.    Then for all $h>0$ small enough, there exists a solution $u(x,\zeta;h)\in H^1(\Omega)$ to the magnetic Schr\"odinger equation $L_{A,q}u=0$ in $\Omega$, of the form
\[
u(x,\zeta;h)=e^{x\cdot\zeta/h}(e^{\Phi^\sharp(x,\zeta_0;h)}+r(x,\zeta;h)).
\] 
The function  $\Phi^\sharp (\cdot,\zeta_0;h)\in C^\infty(\R^n)$ satisfies 
$\|\p^\alpha \Phi^\sharp\|_{L^\infty(\R^n)}\le C_\alpha h^{-\sigma|\alpha|}$, $0<\sigma<1/2$,   
for all $\alpha$, $|\alpha|\ge 0$, and $\Phi^\sharp (\cdot,\zeta_0;h)$ converges  to $\Phi(\cdot,\zeta_0):=N_{\zeta_0}^{-1}(-i\zeta_0\cdot A)\in L^\infty(\R^n)$ in $L^2_{\emph{\textrm{loc}}}(\R^n)$ as $h\to 0$. Here we have extended $A$ by  zero to $\R^n\setminus\Omega$.  The remainder $r$ is such that $\|r\|_{H^1_{\emph{\textrm{scl}}}(\Omega)}=o(1)$ as $h\to 0$. 
\end{prop}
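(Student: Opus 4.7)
The plan is to seek a solution of the form $u(x,\zeta;h) = e^{x\cdot\zeta/h}(a + r)$, with $a = e^{\Phi^\sharp(x,\zeta_0;h)}$ and $r$ a small remainder in $H^1_{\text{scl}}(\Omega)$. The starting point is the conjugation identity \eqref{eq_2_8}, which expresses $e^{-x\cdot\zeta/h}\circ h^2 L_{A,q} \circ e^{x\cdot\zeta/h}$. After splitting $\zeta = \zeta_0 + \zeta_1$ with $\zeta_1 = \mathcal{O}(h)$ and artificially inserting $A^\sharp = A * \Psi_\tau$ at the linear level, we split off the transport term $-2ih(\zeta_0 \cdot D + \zeta_0 \cdot A^\sharp)a$, and require $a$ to solve the transport equation \eqref{eq_2_9}, which leaves the remainder equation \eqref{eq_2_10} for $r$.

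To solve the transport equation I would take $a = e^{\Phi^\sharp}$ and reduce it to the $\bar\partial$-type equation \eqref{eq_2_9_phi} for $\Phi^\sharp$. Since $A^\sharp \in C^\infty_0(\R^n,\C^n)$, Lemma \ref{lem_salo_1} gives $\Phi^\sharp = N_{\zeta_0}^{-1}(-i\zeta_0 \cdot A^\sharp) \in C^\infty(\R^n)$ as a valid solution. Combining \eqref{eq_flat_est_2} with \eqref{eq_salo_1} then yields the derivative bound $\|\partial^\alpha \Phi^\sharp\|_{L^\infty(\R^n)} \le C_\alpha \tau^{-|\alpha|}$. The $L^2_{\text{loc}}$ convergence $\Phi^\sharp \to \Phi := N_{\zeta_0}^{-1}(-i\zeta_0\cdot A)$ as $\tau \to 0$ then follows from the weighted estimate of Lemma \ref{lem_SU} combined with $\|A - A^\sharp\|_{L^2} = o(1)$ from \eqref{eq_flat_est}.

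For the remainder, I would feed the right-hand side $g$ of \eqref{eq_2_10} into the solvability result Proposition \ref{prop_solvability}, which produces $r \in H^1(\Omega)$ with $\|r\|_{H^1_{\text{scl}}(\Omega)} \le (C/h) \|g\|_{H^{-1}_{\text{scl}}(\Omega)}$. It therefore suffices to show $\|g\|_{H^{-1}_{\text{scl}}(\Omega)} = o(h)$. Testing $g$ term-by-term against $\psi \in C_0^\infty(\Omega)$, one bounds each of the Laplacian, drift, and $\zeta_1$ contributions using the derivative estimate on $a = e^{\Phi^\sharp}$, yielding terms of size $\mathcal{O}(h^2/\tau^2) + \mathcal{O}(h^2/\tau) + \mathcal{O}(h^2)$; the two terms involving $A - A^\sharp$ (including the one hiding inside $m_A(a) = D\cdot(A a)$, where integration by parts on the smooth piece $A^\sharp a$ converts one derivative onto the smooth amplitude and the rough piece is paired with $hD\psi$) contribute $\mathcal{O}(h) o_{\tau\to 0}(1)$. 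The zero-order term $h^2(A^2+q)a$ is trivially $\mathcal{O}(h^2)$.

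The main obstacle is balancing the two competing scales: the smoothing parameter $\tau$ must be small enough that $A^\sharp$ approximates $A$ in $L^2$, yet large enough that derivatives of $A^\sharp$ (and hence of $\Phi^\sharp$ and $a$) do not overwhelm the gain of $h$ from the Carleman estimate. The assembled estimate $\|g\|_{H^{-1}_{\text{scl}}(\Omega)} = \mathcal{O}(h^2/\tau^2) + \mathcal{O}(h) o_{\tau \to 0}(1)$ is exactly $o(h)$ precisely when $\tau = h^\sigma$ with $0 < \sigma < 1/2$, which is the choice I would make. With this $\tau$, Proposition \ref{prop_solvability} delivers $\|r\|_{H^1_{\text{scl}}(\Omega)} = o(1)$, and the proposition follows.
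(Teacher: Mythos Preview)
Your proposal is correct and follows essentially the same route as the paper: the paper's proof of this proposition is precisely the discussion in Section~\ref{sec_CGO} preceding it, and you have faithfully recapitulated each step---the conjugation \eqref{eq_2_8}, the mollified transport equation \eqref{eq_2_9_phi} solved via Lemma~\ref{lem_salo_1}, the $L^2_{\textrm{loc}}$ convergence via Lemma~\ref{lem_SU}, the term-by-term $H^{-1}_{\textrm{scl}}$ estimate of $g$ (including the integration-by-parts treatment of $m_A(a)$), the choice $\tau=h^\sigma$ with $0<\sigma<1/2$, and the invocation of Proposition~\ref{prop_solvability}.
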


\section{Proof of Theorem \ref{thm_main}}

\label{sec_proof_main}

Let us begin by recalling the following auxiliary, essentially well-known, result which shows that the set of the Cauchy data for the magnetic Schr\"odinger operator remains unchanged if the gradient of a function, vanishing along the boundary, is added to the magnetic potential, see \cite[Lemma 4.1]{Salo_diss}, \cite{Sun_1993}. 

\begin{lem}
\label{lem_obstruction_to_uniqueness}
Let $\Omega\subset\R^n$ be a bounded open set,  let $A\in L^\infty(\Omega,\C^n)$, $q\in L^\infty(\Omega, \C)$, and let $\psi\in W^{1,\infty}$ in a neighborhood of $\overline{\Omega}$. Then we have 
\begin{equation}
\label{eq_conj_lem}
e^{-i\psi}\circ L_{A,q}\circ e^{i\psi}=L_{A+\nabla \psi,q}.
\end{equation}
If furthermore, $\psi|_{\p \Omega}=0$ then 
\begin{equation}
\label{eq_conj_lem_2}
C_{A,q}=C_{A+\nabla\psi,q}. 
\end{equation}

\end{lem}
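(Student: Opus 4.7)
My plan is to verify the operator identity \eqref{eq_conj_lem} by a direct distributional computation, and then deduce \eqref{eq_conj_lem_2} by exhibiting the explicit bijection $u\mapsto v:=e^{-i\psi}u$ between solution spaces and checking that it preserves both components of the Cauchy datum. For \eqref{eq_conj_lem}, the key is the pointwise identity $D_j(e^{i\psi}u)=(\partial_{x_j}\psi)e^{i\psi}u+e^{i\psi}D_ju$, which is meaningful as a distributional statement because $\psi\in W^{1,\infty}$. This gives $e^{-i\psi}(D_j+A_j)e^{i\psi}=D_j+A_j+\partial_{x_j}\psi$ on $H^1(\Omega)$; iterating once more and summing over $j$ yields \eqref{eq_conj_lem}.

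For \eqref{eq_conj_lem_2}, the first step is an auxiliary multiplicative fact: if $f\in W^{1,\infty}$ in a neighborhood of $\overline\Omega$ vanishes on $\partial\Omega$, then multiplication by $f$ sends $H^1(\Omega)$ into $H^1_0(\Omega)$, with no boundary-regularity assumption on $\Omega$. I would prove this with a cutoff $\chi_\varepsilon(x):=\chi\bigl(\mathrm{dist}(x,\partial\Omega)/\varepsilon\bigr)$ (mollified to be smooth); each $\chi_\varepsilon fu$ has compact support in $\Omega$ and hence lies in $H^1_0(\Omega)$. Passing to the limit $\varepsilon\to 0$ in $H^1(\Omega)$, the only delicate term is $(\nabla\chi_\varepsilon)fu$, supported on the strip $\{\varepsilon<\mathrm{dist}(x,\partial\Omega)<2\varepsilon\}$. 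Since $f$ is Lipschitz with $f|_{\partial\Omega}=0$, one has $|f(x)|\le L\,\mathrm{dist}(x,\partial\Omega)\le 2L\varepsilon$ there, which cancels the $|\nabla\chi_\varepsilon|\lesssim 1/\varepsilon$ blow-up and leaves a uniformly bounded sequence converging to $0$ almost everywhere; dominated convergence then gives $L^2$-convergence to $0$.

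With this in hand, applying the fact to $f=1-e^{-i\psi}$ yields $Tv=Tu$, and \eqref{eq_conj_lem} yields $L_{A+\nabla\psi,q}v=0$. For the Neumann data, I would substitute $v$ and $A+\nabla\psi$ into \eqref{eq_trace_boundary_arbitrary} with an arbitrary representative $g\in H^1(\Omega)$; after expanding $\nabla v=e^{-i\psi}(\nabla u-iu\nabla\psi)$ and $(A+\nabla\psi)^2=A^2+2A\cdot\nabla\psi+|\nabla\psi|^2$, I expect the $\nabla\psi$ contributions to cancel so that the expression collapses to $(N_{A,q}u,[\tilde g])_\Omega$ with $\tilde g:=e^{-i\psi}g$. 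Since $g-\tilde g=(1-e^{-i\psi})g\in H^1_0(\Omega)$ by the auxiliary fact, $[\tilde g]=[g]$, so $(N_{A+\nabla\psi,q}v,[g])_\Omega=(N_{A,q}u,[g])_\Omega$. Thus $C_{A,q}\subset C_{A+\nabla\psi,q}$; the reverse inclusion follows by running the same argument with $\psi$ replaced by $-\psi$ and $A$ by $A+\nabla\psi$.

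The main obstacle is precisely the trace-equality step, because $\Omega$ is an arbitrary bounded open set and the usual identification $H^1(\Omega)/H^1_0(\Omega)\cong H^{1/2}(\partial\Omega)$ is unavailable; the distance-function cutoff exploiting the Lipschitz control $|f|\le L\,\mathrm{dist}(\cdot,\partial\Omega)$ is what compensates, while the remainder of the argument reduces to the algebraic identity \eqref{eq_conj_lem} and a tedious but mechanical expansion of the bilinear form defining $N_{A,q}$.
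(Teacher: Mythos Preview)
Your proposal is correct and follows essentially the same route as the paper: verify \eqref{eq_conj_lem} by direct computation, set $v=e^{-i\psi}u$, show $u-v\in H^1_0(\Omega)$ via the Lipschitz bound $|e^{-i\psi}-1|\le C\,\mathrm{dist}(\cdot,\partial\Omega)$, and then check the Neumann data by pairing against a test function and using $[g]=[e^{\pm i\psi}g]$. The only substantive difference is that for the auxiliary fact ``$f$ Lipschitz with $f|_{\partial\Omega}=0$ implies $fu\in H^1_0(\Omega)$'' the paper invokes a Hardy-type criterion from the literature (if $v\in H^1(\Omega)$ and $v/\mathrm{dist}(\cdot,\partial\Omega)\in L^2(\Omega)$ then $v\in H^1_0(\Omega)$), whereas you supply a self-contained cutoff argument; your argument is sound since $|(\nabla\chi_\varepsilon)fu|\le C|u|$ uniformly and vanishes pointwise, so dominated convergence applies.
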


\begin{proof}
Let us notice first that the assumption that $\psi\in W^{1,\infty}$ in a neighborhood of $\overline{\Omega}$ implies that $\psi$ is Lipschitz continuous on $\overline{\Omega}$, so that $\psi|_{\p \Omega}$ is well-defined pointwise. 

Since \eqref{eq_conj_lem} follows by a direct computation, only \eqref{eq_conj_lem_2} has to be established. To that end, let $u\in H^1(\Omega)$ be a solution to $L_{A,q}u=0$ in $\Omega$. Then 
$e^{-i\psi}u\in H^1(\Omega)$ satisfies $L_{A+\nabla \psi,q}(e^{-i\psi}u)=0$ in $\Omega$.  Let us show that $T(e^{-i\psi}u)=Tu$. In other words, we have to check that 
\begin{equation}
\label{eq_conj_lem_3}
u(e^{-i\psi}-1)\in H^1_0(\Omega).
\end{equation} 
Since the function $e^{-i\psi}-1$ is Lipschitz continuous on $\overline{\Omega}$ and vanishes along $\p \Omega$, we have $|e^{-i\psi(x)}-1|\le C d(x)$ for any $x\in \Omega$ and some constant $C>0$. Here $d(x)$ is the distance from $x$ to the boundary of $\Omega$. Then \eqref{eq_conj_lem_3} follows from the following fact: if $v\in H^1(\Omega)$ and $v/d\in L^2(\Omega)$,  then $v\in H^1_0(\Omega)$, see \cite[Theorem 3.4, p. 223]{Edmunds_Evans_book}.

Let us now show that $N_{A+\nabla\psi,q}(e^{-i\psi}u)=N_{A,q}u$. To that end, first as above, one observes that for $g\in H^{1}(\Omega)$, we have $[g]=[e^{i\psi}g]$. Thus, 
\[
(N_{A+\nabla\psi,q}(e^{-i\psi}u),[g] )_\Omega=(N_{A+\nabla\psi,q}(e^{-i\psi}u),[e^{i\psi}g] )_{\Omega}=( N_{A,q}(u),[g] )_\Omega,
\]
for any $[g]\in H^1(\Omega)/H^1_0(\Omega)$, and therefore, $C_{A,q}\subset C_{A+\nabla\psi,q}$. The proof is complete. 
\end{proof}

The first step in the proof of Theorem \ref{thm_main} is the derivation of the following integral identity based on the fact that $C_{A_1,q_1}=C_{A_2,q_2}$, see also \cite[Lemma 4.3]{Salo_diss}. 
\begin{prop}
\label{prop_eq_int_identity}
Let $\Omega\subset \R^n$, $n\ge 3$,  be a bounded open set.  Assume that $A_1,A_2\in L^\infty(\Omega,\C^n)$ and $q_1,q_2\in L^\infty(\Omega,\C)$. If $C_{A_1,q_1}=C_{A_2,q_2}$, then the following integral identity 
\begin{equation}
\label{eq_int_identity}
\int_\Omega i(A_1-A_2)\cdot (u_1\nabla \overline{u_2}-\overline{u_2}\nabla u_1)dx+\int_\Omega(A_1^2-A_2^2+q_1-q_2)u_1\overline{u_2}dx=0 
\end{equation}
holds for any $u_1,u_2\in H^1(\Omega)$ satisfying $L_{A_1,q_1}u_1=0$ in $\Omega$ and $L_{\overline{A_2},\overline{q_2}}u_2=0$ in $\Omega$, respectively.  
\end{prop}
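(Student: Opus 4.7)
The plan is to leverage $C_{A_1,q_1}=C_{A_2,q_2}$ to produce, for each solution $u_1\in H^1(\Omega)$ of $L_{A_1,q_1}u_1=0$, an auxiliary $\tilde u_2\in H^1(\Omega)$ satisfying $L_{A_2,q_2}\tilde u_2=0$, $T\tilde u_2=Tu_1$, and $N_{A_1,q_1}u_1=N_{A_2,q_2}\tilde u_2$. Applying these equal functionals to $[\overline{u_2}]\in H^1(\Omega)/H^1_0(\Omega)$ via the definition \eqref{eq_trace_boundary_arbitrary} yields
\begin{align*}
&\int_\Omega\bigl(\nabla u_1\cdot\nabla\overline{u_2}+iA_1\cdot(u_1\nabla\overline{u_2}-\overline{u_2}\nabla u_1)+(A_1^2+q_1)u_1\overline{u_2}\bigr)dx\\
&\quad= \int_\Omega\bigl(\nabla\tilde u_2\cdot\nabla\overline{u_2}+iA_2\cdot(\tilde u_2\nabla\overline{u_2}-\overline{u_2}\nabla \tilde u_2)+(A_2^2+q_2)\tilde u_2\overline{u_2}\bigr)dx.
\end{align*}
Subtracting these two integrals would produce \eqref{eq_int_identity} immediately, \emph{provided} $\tilde u_2$ could be replaced by $u_1$ on the right-hand side. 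Setting $w:=\tilde u_2-u_1\in H^1_0(\Omega)$, the proof thus reduces to verifying that the analogous integral with $w$ in place of $\tilde u_2$ vanishes.

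To see this I rely on two algebraic observations. First, complex conjugation of $L_{\overline{A_2},\overline{q_2}}u_2=0$, using $\overline{Du_2}=-D\overline{u_2}$, yields $L_{-A_2,q_2}\overline{u_2}=0$ in $\Omega$ in the sense of distributions; the crucial sign flip in the magnetic potential comes from this identity. Second, the integrand in \eqref{eq_trace_boundary_arbitrary} is invariant under the simultaneous exchange $(u,g)\mapsto(g,u)$ and $A\mapsto -A$, as one sees immediately from the antisymmetry of $u\nabla g-g\nabla u$. Consequently, for $\phi\in C_0^\infty(\Omega)$,
\[
\int_\Omega\bigl(\nabla\phi\cdot\nabla\overline{u_2}+iA_2\cdot(\phi\nabla\overline{u_2}-\overline{u_2}\nabla\phi)+(A_2^2+q_2)\phi\overline{u_2}\bigr)dx = \langle L_{-A_2,q_2}\overline{u_2},\phi\rangle_\Omega = 0,
\]
where the first equality combines the swap symmetry with the standard Green's identity for $L_{-A_2,q_2}\overline{u_2}$ paired against $\phi$. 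Density of $C_0^\infty(\Omega)$ in $H^1_0(\Omega)$ and continuity of this integral on $H^1(\Omega)\times H^1(\Omega)$ extend the vanishing from $\phi$ to $w$, as required.

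The main obstacle is precisely this vanishing step: because the pairing \eqref{eq_trace_boundary_arbitrary} is bilinear rather than sesquilinear, $\overline{u_2}$ is not directly a solution of any $L_{A_j,q_j}$, and the argument succeeds only because the sign flip in $A$ produced by complex conjugation is exactly compensated by the asymmetry of the bilinear form under argument swap. Once this small algebraic compatibility is in hand, the remainder of the proof is standard Alessandrini-type bookkeeping.
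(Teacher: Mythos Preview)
Your argument is correct and follows essentially the same route as the paper. The paper packages the step you spell out as the single symmetry identity $(N_{A_2,q_2}v_2,[\overline{u_2}])_\Omega=\overline{(N_{\overline{A_2},\overline{q_2}}u_2,[\overline{v_2}])_\Omega}$ and then replaces $[\overline{v_2}]$ by $[\overline{u_1}]$ using that $N_{\overline{A_2},\overline{q_2}}u_2$ is well defined on the quotient $H^1(\Omega)/H^1_0(\Omega)$; your computation that the bilinear form vanishes on $w\in H^1_0(\Omega)$ because $L_{-A_2,q_2}\overline{u_2}=0$ is precisely the same content unpacked, and your swap/conjugation observation is exactly what underlies that symmetry identity.
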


\begin{proof}
Let $u_1, u_2\in H^1(\Omega)$ be  solutions to $L_{A_1,q_1}u_1=0$ in $\Omega$ and 
$L_{\overline{A_2},\overline{q_2}}u_2=0$ in $\Omega$, respectively.  Then the fact that $C_{A_1,q_1}=C_{A_2,q_2}$ implies that there is $v_2\in H^1(\Omega)$ satisfying $L_{A_2,q_2}v_2=0$ in $\Omega$ such that 
\[
Tu_1=Tv_2\quad\textrm{and}\quad N_{A_1,q_1}u_1=N_{A_2,q_2}v_2. 
\]
This together with \eqref{eq_trace_boundary_arbitrary} shows that 
\[
(N_{A_1,q_1}u_1 ,[\overline{u_2}] )_\Omega=( N_{A_2,q_2}v_2, [\overline{u_2}])_\Omega=\overline{( N_{\overline{A_2},\overline{q_2}}u_2, [\overline{v_2}])_\Omega}
=\overline{( N_{\overline{A_2},\overline{q_2}}u_2, [\overline{u_1}])_\Omega}.
\]
Then the integral identity \eqref{eq_int_identity} follows from the definition \eqref{eq_trace_boundary_arbitrary} of $N_{A_1,q_1}u_1$ and $N_{\overline{A_2},\overline{q_2}}u_2$.  The proof is complete. 
\end{proof}

We shall use the integral identity \eqref{eq_int_identity} with $u_1$ and $u_2$ being complex geometric optics solutions for the magnetic Schr\"odinger equations in $\Omega$. To construct such solutions,  let $\xi,\mu_1,\mu_2\in\R^n$ be such that $|\mu_1|=|\mu_2|=1$ and $\mu_1\cdot\mu_2=\mu_1\cdot\xi=\mu_2\cdot\xi=0$. 
Similarly to \cite{Sun_1993}, we set 
\begin{equation}
\label{eq_zeta_1_2}
\zeta_1=\frac{ih\xi}{2}+\mu_1 + i\sqrt{1-h^2\frac{|\xi|^2}{4}}\mu_2 , \quad 
\zeta_2=-\frac{ih\xi}{2}-\mu_1+i\sqrt{1-h^2\frac{|\xi|^2}{4}}\mu_2,
\end{equation}
so that $\zeta_j\cdot\zeta_j=0$, $j=1,2$, and $(\zeta_1+\overline{\zeta_2})/h=i\xi$. Here $h>0$ is a small enough semiclassical parameter.  Moreover, $\zeta_1= \mu_1+ i\mu_2+\mathcal{O}(h)$ and $\zeta_2= -\mu_1+ i\mu_2+\mathcal{O}(h)$ as $h\to 0$. 

By Proposition \ref{prop_cgo_solutions},  for all $h>0$ small enough, there exists a solution $u_1(x,\zeta_1;h)\in H^1(\Omega)$ to the magnetic Schr\"odinger equation $L_{A_1,q_1}u_1=0$ in $\Omega$, of the form
\begin{equation}
\label{eq_u_1}
u_1(x,\zeta_1;h)=e^{x\cdot\zeta_1/h}(e^{\Phi_1^\sharp(x,\mu_1+i\mu_2;h)}+r_1(x,\zeta_1;h)),
\end{equation}
where $\Phi_1^\sharp(\cdot,\mu_1+i\mu_2;h)\in C^\infty(\R^n)$ satisfies the estimate 
\begin{equation}
\label{eq_est_phi_1}
\|\p^\alpha \Phi_1^\sharp\|_{L^\infty(\R^n)}\le C_\alpha h^{-\sigma|\alpha|},\quad 0<\sigma<1/2,
\end{equation}
for all $\alpha$, $|\alpha|\ge 0$, $\Phi_1^\sharp(\cdot,\mu_1+i\mu_2;h)$ converges to 
\begin{equation}
\label{eq_phi_1_def}
\Phi_{1}(\cdot,\mu_1+i\mu_2):=N_{\mu_1+i\mu_2}^{-1}
(-i(\mu_1+i\mu_2)\cdot A_1)\in L^\infty(\R^n)
\end{equation}
 in $L^2_{\textrm{loc}}(\R^n)$ as $h\to 0$, 
and 
\begin{equation}
\label{eq_est_r_1}
\|r_1\|_{H^1_{\textrm{scl}}(\Omega)}=o(1)\quad \textrm{as}\quad h\to 0.
\end{equation}
Similarly, for all $h>0$ small enough, there exists a solution $u_2(x,\zeta_2;h)\in H^1(\Omega)$ to the magnetic Schr\"odinger equation $L_{\overline{A_2},\overline{q_2}}u_2=0$ in $\Omega$, of the form
\begin{equation}
\label{eq_u_2}
u_2(x,\zeta_2;h)=e^{x\cdot\zeta_2/h}(e^{\Phi_2^\sharp(x,-\mu_1+i\mu_2;h)}+r_2(x,\zeta_2;h)),
\end{equation}
where $\Phi_2^\sharp(\cdot,-\mu_1+i\mu_2;h)\in C^\infty(\R^n)$ satisfies the estimate 
\begin{equation}
\label{eq_est_phi_2}
\|\p^\alpha \Phi_2^\sharp\|_{L^\infty(\R^n)}\le C_\alpha h^{-\sigma|\alpha|},\quad 0<\sigma<1/2,
\end{equation}
for all $\alpha$, $|\alpha|\ge 0$. Furthermore, $\Phi_2^\sharp(\cdot,-\mu_1+i\mu_2;h)$ converges to 
\begin{equation}
\label{eq_phi_2_def}
\Phi_{2}(\cdot,-\mu_1+i\mu_2):=N_{-\mu_1+i\mu_2}^{-1}
(-i(-\mu_1+i\mu_2)\cdot \overline{A_2})\in L^\infty(\R^n)
\end{equation}
 in $L^2_{\textrm{loc}}(\R^n)$ as $h\to 0$, 
and 
\begin{equation}
\label{eq_est_r_2}
\|r_2\|_{H^1_{\textrm{scl}}(\Omega)}=o(1)\quad \textrm{as}\quad h\to 0.
\end{equation}

We shall next  substitute $u_1$ and $u_2$, given by \eqref{eq_u_1} and \eqref{eq_u_2}, into the integral identity \eqref{eq_int_identity}, multiply it by $h$, and let $h\to 0$. We first compute
\begin{align*}
h u_1\nabla\overline{u_2}=&\overline{\zeta_2}e^{ix\cdot\xi}(e^{\Phi_1^\sharp+\overline{\Phi_2^\sharp}}+e^{\Phi_1^\sharp}\overline{r_2}+r_1e^{\overline{\Phi_2^\sharp}}+r_1\overline{r_2})\\
&+he^{ix\cdot\xi}(e^{\Phi_1^\sharp}\nabla e^{\overline{\Phi_2^\sharp}} + e^{\Phi_1^\sharp}\nabla \overline{r_2} + r_1\nabla e^{\overline{\Phi_2^\sharp}} +r_1  \nabla \overline{r_2}).
\end{align*}
Recall that $\overline{\zeta_2}=-\mu_1-i\mu_2+\mathcal{O}(h)$. We shall show that 
\[
(\mu_1+i\mu_2)\cdot\int_\Omega (A_1-A_2) e^{ix\cdot\xi}e^{\Phi_1^\sharp+\overline{\Phi_2^\sharp}}dx\to (\mu_1+i\mu_2)\cdot\int_\Omega (A_1-A_2) e^{ix\cdot\xi}e^{\Phi_1+\overline{\Phi_2}}dx,
\] 
as $h\to 0$, where $\Phi_1$ and $\Phi_2$ are defined by \eqref{eq_phi_1_def} and \eqref{eq_phi_2_def}, respectively.   To that end, we have
\begin{align*}
\bigg| (\mu_1+i\mu_2)\cdot\int_\Omega (A_1-A_2) e^{ix\cdot\xi}\big(e^{\Phi_1^\sharp+\overline{\Phi_2^\sharp}}- e^{\Phi_1+\overline{\Phi_2}}\big)dx \bigg|\le C\big\|e^{\Phi_1^\sharp+\overline{\Phi_2^\sharp}}- e^{\Phi_1+\overline{\Phi_2}}\big\|_{L^2(\Omega)}\\
\le C\|\Phi_1^\sharp+\overline{\Phi_2^\sharp}-\Phi_1-\overline{\Phi_2}\|_{L^2(\Omega)}\to 0,
\end{align*}
as $h\to 0$. Here we have used the inequality 
\begin{equation}
\label{eq_complex_exp}
|e^z-e^w|\le |z-w|e^{\textrm{max}(\textrm{Re}\, z,\textrm{Re}\, w)},\quad z,w\in \C,
\end{equation}
obtained by integration of $e^z$ from $z$ to $w$, and the fact that $\Phi_j, \Phi_j^\sharp\in L^\infty(\R^n)$, $j=1,2$, and $\|\Phi_j^\sharp\|_{L^\infty(\R^n)}\le C$ uniformly in $h$.

Now using the estimates \eqref{eq_est_phi_1}, \eqref{eq_est_r_1}, \eqref{eq_est_phi_2} and \eqref{eq_est_r_2}, we get
\begin{align*}
\bigg|\int_\Omega & i(A_1-A_2)\cdot \overline{\zeta_2} e^{ix\cdot\xi}(e^{\Phi_1^\sharp}\overline{r_2}+r_1e^{\overline{\Phi_2^\sharp}}+r_1\overline{r_2})dx\bigg|\\
&\le C\|A_1-A_2\|_{L^\infty}
(\|e^{\Phi_1^\sharp}\|_{L^2}\|\overline{r_2}\|_{L^2}+\|r_1\|_{L^2}\|e^{\overline{\Phi_2^\sharp}}\|_{L^2}+\|r_1\|_{L^2}\|\overline{r_2}\|_{L^2})=o(1),
\end{align*}
as $h\to 0$. We also  obtain that 
\begin{align*}
\bigg|\int_\Omega h i(A_1-A_2)\cdot e^{ix\cdot\xi}(e^{\Phi_1^\sharp}\nabla e^{\overline{\Phi_2^\sharp}} + e^{\Phi_1^\sharp}\nabla \overline{r_2} + r_1\nabla e^{\overline{\Phi_2^\sharp}} +r_1  \nabla \overline{r_2})dx\bigg|\\
\le \mathcal{O}(h)(h^{-\sigma}+ h^{-1}o(1)+ o(1)h^{-\sigma}+o(1)h^{-1})=o(1),
\end{align*}
as $h\to 0$. Here $0<\sigma<1/2$.  Furthermore, 
\begin{align*}
\bigg|h\int_\Omega(A_1^2-A_2^2+q_1-q_2)e^{ix\cdot\xi}(e^{\Phi_1^\sharp+\overline{\Phi_2^\sharp}}+e^{\Phi_1^\sharp}\overline{r_2}+r_1e^{\overline{\Phi_2^\sharp}}+r_1\overline{r_2})dx\bigg|=\mathcal{O}(h),
\end{align*}
as $h\to 0$. Hence, substituting  $u_1$ and $u_2$, given by \eqref{eq_u_1} and \eqref{eq_u_2}, into the integral identity \eqref{eq_int_identity}, multiplying it by $h$, and letting $h\to 0$, we get
\begin{equation}
\label{eq_with_phases_R_n}
(\mu_1+i\mu_2)\cdot\int_{\R^n} (A_1-A_2) e^{ix\cdot\xi} e^{\Phi_{1}(x,\mu_1+i\mu_2)+\overline{\Phi_{2}(x,-\mu_1+i\mu_2)}}dx=0,
\end{equation}
where 
\begin{align*}
\Phi_{1}=N_{\mu_1+i\mu_2}^{-1}(- i(\mu_1+i\mu_2)\cdot A_1) \in L^\infty(\R^n),\\
\Phi_{2}= N_{-\mu_1+i\mu_2}^{-1} ( - i(-\mu_1+i\mu_2)\cdot \overline{A_2})\in L^\infty(\R^n).
\end{align*}
Notice that the integration in \eqref{eq_with_phases_R_n} is extended to all of $\R^n$, since $A_1=A_2=0$ on $\R^n\setminus{\Omega}$.

The next step is to remove the  function $e^{\Phi_{1}+\overline{\Phi_{2}}}$ in the integral \eqref{eq_with_phases_R_n}. First using the following properties of the Cauchy transform, 
\[
\overline{N_{\zeta}^{-1}f}=N_{\overline{\zeta}}^{-1}\overline{f},\quad  N_{-\zeta}^{-1}f=-N_\zeta^{-1}f,
\]
we see that 
\begin{equation}
\label{eq_sum_phase}
\Phi_{1}+\overline{\Phi_{2}}=N_{\mu_1+i\mu_2}^{-1}(- i(\mu_1+i\mu_2)\cdot (A_1-A_2)).  
\end{equation}

We have the following result.   
\begin{prop} 
\label{prop_eskin_ralston}
Let  $\xi,\mu_1,\mu_2\in\R^n$, $n\ge 3$,  be such that $|\mu_1|=|\mu_2|=1$ and $\mu_1\cdot\mu_2=\mu_1\cdot\xi=\mu_2\cdot\xi=0$. 
Let $W\in (L^\infty\cap \mathcal{E}')(\R^n, \C^n)$ and $\phi=N_{\mu_1+i\mu_2}^{-1}(- i(\mu_1+i\mu_2)\cdot W)$. Then 
\begin{equation}
\label{eq_eskin_ralston}
(\mu_1+i\mu_2)\cdot\int_{\R^n} W(x) e^{ix\cdot\xi} e^{\phi(x)} dx=(\mu_1+i\mu_2)\cdot\int_{\R^n} W(x) e^{ix\cdot\xi} dx.
\end{equation}
\end{prop}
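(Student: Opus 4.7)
The plan is to exploit the relation $\zeta_0\cdot\nabla\phi = -i\,\zeta_0\cdot W$ (with $\zeta_0 := \mu_1 + i\mu_2$) by multiplying through by $e^\phi$. This yields $\zeta_0\cdot\nabla(e^\phi) = -i(\zeta_0\cdot W)e^\phi$, and subtracting the original equation produces the pointwise identity $(\zeta_0\cdot W)(e^\phi - 1) = i\,\zeta_0\cdot\nabla(e^\phi - \phi)$. Inserting this into the difference of the two sides of \eqref{eq_eskin_ralston} reduces the statement to proving
\[
I := \int_{\R^n} e^{ix\cdot\xi}\,\zeta_0\cdot\nabla(e^\phi - \phi)\,dx = 0.
\]
Note that the integrand equals $-i(\zeta_0\cdot W)(e^\phi - 1)$, hence has compact support, so $I$ is absolutely convergent.

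Next I would exploit that $\zeta_0\cdot\nabla$ is the $\bar\partial$-operator in the $2$-plane spanned by $\mu_1$ and $\mu_2$. Introduce coordinates $y = (y_1,y_2,y')$ with $y_1 = x\cdot\mu_1$, $y_2 = x\cdot\mu_2$, and $y'\in\R^{n-2}$ orthogonal to $\mu_1,\mu_2$; set $z = y_1 + iy_2$. Then $\zeta_0\cdot\nabla = 2\partial_{\bar z}$, and $e^{ix\cdot\xi} = e^{iy'\cdot\xi}$ because $\xi\perp\mu_1,\mu_2$. Fubini's theorem turns $I$ into
\[
I = 2\int_{\R^{n-2}} e^{iy'\cdot\xi}\Bigl(\int_{\R^2}\partial_{\bar z}\bigl(e^{\phi(z,y')} - \phi(z,y')\bigr)\,dy_1\,dy_2\Bigr)\,dy',
\]
so it suffices to show that the inner integral over $\R^2$ vanishes for each fixed $y'$.

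For fixed $y'$, set $g(z) = e^{\phi(z,y')} - \phi(z,y')$. Outside the $z$-projection of the support of $\zeta_0\cdot W(\cdot,y')$, we have $\partial_{\bar z}g = 0$, so $g$ is holomorphic there. The explicit representation of $N_{\zeta_0}^{-1}$ as a convolution with $(2\pi z)^{-1}$, applied to a compactly supported $L^\infty$ function, shows that $\phi(z,y') = O(|z|^{-1})$ as $|z|\to\infty$, hence $g(z) = 1 + \tfrac{1}{2}\phi^2 + O(\phi^3) = 1 + O(|z|^{-2})$. By Stokes' theorem on a large disk $\{|z|<R\}$,
\[
\int_{\R^2}\partial_{\bar z}g\,dy_1\,dy_2 = \frac{1}{2i}\oint_{|z|=R}g(z)\,dz = \frac{1}{2i}\oint_{|z|=R}(g(z)-1)\,dz = O(1/R),
\]
since $\oint_{|z|=R}dz = 0$. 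Letting $R\to\infty$ gives $0$, as required.

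The main technical obstacle is the justification of Stokes' theorem, because $\phi$ is only in $L^\infty$ and hence $g$ is not classically differentiable in $z$ on the full plane. I would handle this by regularising at the source: mollify $\zeta_0\cdot W$ to obtain smooth compactly supported data $f_\varepsilon$, work with $\phi_\varepsilon := N_{\zeta_0}^{-1}(-if_\varepsilon) \in C^\infty(\R^n)$ (with uniform $L^\infty$-bound from \eqref{eq_salo_1}), execute the Stokes calculation for $\phi_\varepsilon$, and pass to the limit using Lemma~\ref{lem_SU} to ensure $\phi_\varepsilon\to\phi$ in $L^2_{\mathrm{loc}}$; the decay $\phi_\varepsilon(z,y') = O(|z|^{-1})$ holds with a constant uniform in $\varepsilon$ thanks to the explicit Cauchy-kernel form of $N_{\zeta_0}^{-1}$.
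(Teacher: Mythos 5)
Your proposal is correct and is essentially the same as the paper's argument: both pass to complex coordinates in the $\mu_1,\mu_2$-plane, reduce to an integral of $\partial_{\bar z}$ of a function with the right decay, apply the Gauss/Stokes theorem on a large disk using $\phi = O(|z|^{-1})$, and then dispose of the low-regularity case by mollifying $W$ and invoking Lemma~\ref{lem_salo_1} and Lemma~\ref{lem_SU} to pass to the limit. The only cosmetic difference is that you work with $g=e^\phi-\phi$ from the outset, so the linear-in-$\phi$ term that the paper tracks separately (and which produces the right-hand side of \eqref{eq_eskin_ralston}) is removed at the start and the task becomes showing that a single integral vanishes; this is a valid and slightly tidier bookkeeping of the same computation.
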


\begin{proof}

The statement of the proposition for $W\in C_0(\R^n, \C^n)$ is due to \cite{Eskin_Ralston_1995}, with similar ideas appearing in \cite{Sun_1993}. See also \cite[Lemma 6.2]{Salo_2006}. For the completeness and convenience of the reader, we shall give a complete proof of the proposition here.

Assume first that  $W\in C^\infty_0(\R^n, \C^n)$. Then  by Lemma \ref{lem_salo_1} we have
\begin{equation}
\label{eq_3_15}
\phi=N_{\mu_1+i\mu_2}^{-1}(- i(\mu_1+i\mu_2)\cdot W)\in C^\infty(\R^n).
\end{equation}
We can always assume that $\mu_1=(1,0,\dots, 0)$ and $\mu_2=(0,1,0,\dots, 0)$, so that $\xi=(0,0,\xi'')$, $\xi''\in\R^{n-2}$, and therefore,
\[
(\p_{x_1}+i\p_{x_2})\phi=-i(\mu_1+i\mu_2)\cdot W\quad \textrm{in}\quad \R^n.
\]
Hence, writing $x=(x',x'')$, $x'=(x_1,x_2)$, $x''\in \R^{n-2}$, we get 
\begin{align*}
(\mu_1+i\mu_2)\cdot\int_{\R^n} W(x) e^{ix\cdot\xi} e^{\phi(x)} dx&=i\int_{\R^n}e^{ix''\cdot\xi''}e^{\phi(x)}(\p_{x_1}+i\p_{x_2})\phi(x) dx\\
&=i\int_{\R^{n-2}}e^{ix''\cdot\xi''} h(x'')dx'',
\end{align*}
where 
\begin{align*}
h(x'')=\int_{\R^2} (\p_{x_1}+i\p_{x_2})e^{\phi(x)} dx'&=\lim_{R\to \infty} \int_{|x'|\le R} (\p_{x_1}+i\p_{x_2})e^{\phi(x)} dx'\\
&=\lim_{R\to \infty} \int_{|x'|= R}e^{\phi(x)}(\nu_1+i\nu_2)dS_R(x').
\end{align*}
Here $\nu=(\nu_1,\nu_2)$ is the unit outer normal to the circle $|x'|= R$, and we have used the Gauss theorem.  

It follows from \eqref{eq_3_15} that $|\phi(x',x'')|=\mathcal{O}(1/|x'|)$ as $|x'|\to \infty$.  Hence, we have 
\[
e^\phi=1+\phi+\mathcal{O}(|\phi|^2)=1+\phi+\mathcal{O}(|x'|^{-2})\quad \textrm{as}\quad  |x'|\to \infty. 
\]
Since
\begin{align*}
\int_{|x'|= R}(\nu_1+i\nu_2)dS_R(x')=\int_{|x'|\le R}(\p_{x_1}+i\p_{x_2}) (1)dx'=0,\\
\bigg|Ê\int_{|x'|= R}\mathcal{O}(|x'|^{-2})(\nu_1+i\nu_2)dS_R(x')\bigg|\le \mathcal{O}(R^{-1})\quad \textrm{as}\quad R\to \infty, 
\end{align*}
we obtain that 
\begin{align*}
h(x'')=\lim_{R\to \infty} \int_{|x'|= R}\phi(x)(\nu_1+i\nu_2)dS_R(x')&=\lim_{R\to \infty} \int_{|x'|\le  R} (\p_{x_1}+i\p_{x_2})\phi(x)dx'\\
&=-\int_{\R^2} i(\mu_1+i\mu_2)\cdot W(x)dx',
\end{align*}
which shows \eqref{eq_eskin_ralston} for $W\in C^\infty_0(\R^n, \C^n)$.

To prove \eqref{eq_eskin_ralston}   for $W\in (L^\infty\cap \mathcal{E}')(\R^n, \C^n)$, consider the regularizations $W_j=\chi_j*W\in C^\infty_0(\R^n)$. Here $\chi_j(x)=j^n\chi(jx)$ is the usual mollifier with $0\le\chi\in C^\infty_0(\R^n)$ such that $\int\chi dx=1$. Then  $W_j\to W$ in $L^2(\R^n)$ as $j\to \infty$ and 
\begin{equation}
\label{eq_W_j_norm}
\|W_j\|_{L^\infty(\R^n)}\le \|W\|_{L^\infty(\R^n)}\|\chi_j\|_{L^1(\R^n)}=\|W\|_{L^\infty(\R^n)},\quad j=1,2,\dots.
\end{equation}
Furthermore, there is a compact set $K\subset\subset \R^n$ such that $\supp(W_j),\supp(W)\subset K$, $j=1,2,\dots$.  

We set 
$\phi_j=N_{\mu_1+i\mu_2}^{-1}(- i(\mu_1+i\mu_2)\cdot W_j)\in C^\infty(\R^n)$. Then by Lemma \ref{lem_SU}, we know that $\phi_j\to \phi$ in $L^2_{\textrm{loc}}(\R^n)$ as $j\to \infty$.  Lemma \ref{lem_salo_1} together with the estimate \eqref{eq_W_j_norm} implies that 
\begin{equation}
\label{eq_W_j_norm_1}
\|\phi_j\|_{L^\infty(\R^n)}\le C \|W_j\|_{L^\infty(\R^n)}\le C\|W\|_{L^\infty(\R^n)},\quad j=1,2,\dots.
\end{equation}

For $j=1,2,\dots$, we have 
\begin{equation}
\label{eq_eskin_ralston_1}
(\mu_1+i\mu_2)\cdot\int_{K} W_j(x) e^{ix\cdot\xi} e^{\phi_j(x)} dx=(\mu_1+i\mu_2)\cdot\int_{K} W_j(x) e^{ix\cdot\xi} dx.
\end{equation}
The fact that the integral in right hand side of \eqref{eq_eskin_ralston_1} converges to the integral in the right hand side of \eqref{eq_eskin_ralston} as $j\to \infty$ follows from the estimate 
\[
\bigg|(\mu_1+i\mu_2)\cdot\int_{K} (W_j(x)-W(x)) e^{ix\cdot\xi} dx\bigg|\le C\|W_j-W\|_{L^2(K)}\to 0,\quad j\to \infty.
\]
In order to show that the integral in the left hand side of \eqref{eq_eskin_ralston_1} converges to the integral in the left hand side of \eqref{eq_eskin_ralston} as $j\to \infty$, we establish that $I_1+I_2\to 0$ as $j\to \infty$, where 
\begin{align*}
I_1:=(\mu_1+i\mu_2)\cdot\int_{K} (W_j(x)-W(x)) e^{ix\cdot\xi} e^{\phi_j(x)} dx,\\
I_2:= (\mu_1+i\mu_2)\cdot\int_{K} W(x) e^{ix\cdot\xi} (e^{\phi_j(x)}-e^{\phi(x)}) dx.
\end{align*}
Using \eqref{eq_W_j_norm_1}, we have
\[
|I_1|\le C e^{\|\phi_j\|_{L^{\infty}(\R^n)}}\int_K |W_j(x)-W(x)|dx\le  C\|W_j-W\|_{L^2(K)}\to 0,\quad j\to \infty.
\]
Using \eqref{eq_complex_exp} and \eqref{eq_W_j_norm_1}, we get 
\[
|I_2|\le C\|W\|_{L^\infty(\R^n)}\|e^{\phi_j(x)}-e^{\phi(x)}\|_{L^2(K)}\le C\|\phi_j-\phi\|_{L^2(K)}\to 0, \quad j\to \infty.
\]
Here we have also used that $\phi_j\to \phi$ in $L^2_{\textrm{loc}}(\R^n)$ as $j\to \infty$.
Hence, passing to the limit as $j\to \infty$ in \eqref{eq_eskin_ralston_1}, we  obtain the identity \eqref{eq_eskin_ralston}. The proof is complete. 
\end{proof}

By Proposition \ref{prop_eskin_ralston} we conclude from \eqref{eq_with_phases_R_n} and \eqref{eq_sum_phase} that 
\begin{equation}
\label{eq_without_phases}
(\mu_1+i\mu_2)\cdot\int_{\R^n} (A_1(x)-A_2(x)) e^{ix\cdot\xi} dx=0. 
\end{equation}

It follows from \eqref{eq_without_phases} that $\mu\cdot (\hat A_1(\xi)-\hat A_2(\xi))=0$ whenever $\mu,\xi\in\R^n$ are such that $\mu\cdot \xi=0$. Here $\hat A_j$ is the Fourier transform of $A_j$, $j=1,2$. Let $\mu_{jk}(\xi)=\xi_j e_k-\xi_k e_j$ for $j\ne k$, where $e_1,\dots, e_n$ is the standard basis of $\R^n$.  Then $\mu_{jk}(\xi)\cdot\xi=0$, and therefore, 
\[
\xi_j(\hat A_{1,k}(\xi)-\hat A_{2,k}(\xi))-\xi_k(\hat A_{1,j}(\xi)-\hat A_{2,j}(\xi))=0. 
\]
Hence, $\p_{x_j}(A_{1,k}-A_{2,k})-\p_{x_k}(A_{1,j}-A_{2,j})=0$ in $\R^n$ in the sense of distributions,  for $j\ne k$, and thus, $d(A_1-A_2)=0$ in $\R^n$.

Our next goal is to show that $q_1=q_2$ in $\Omega$. First, viewing $A_1-A_2$ as a $1$--current and using the Poincar\'e lemma for currents, we conclude that there is $\psi\in \mathcal{D}'(\R^n)$ such that 
$d\psi=A_1-A_2\in (L^\infty\cap\mathcal{E}')(\R^n)$ in $\R^n$, see \cite{Rham_book}.  It follows from \cite[Theorem 4.5.11]{Horm_book_1} that $\psi$ is continuous on $\R^n$, and since $\psi$ is constant near infinity, we have $\psi\in L^\infty(\R^n)$. Therefore, $\psi\in W^{1,\infty}(\R^n)$, and without loss of generality, we may assume that there is an open ball $B$ such that $\Omega\subset\subset B$ and $\supp(\psi)\subset B$. 

We want to add $\nabla \psi$ to the potential $A_2$ without changing the set of the Cauchy data for $L_{A_2,q_2}$ on the ball $B$. To that end, we shall need the following result, which is due to \cite[Lemma 4.2]{Salo_diss}.

\begin{prop}
\label{prop_Cauchy_data}
Let $\Omega, \Omega'\subset \R^n$ be  bounded open sets such that $\Omega\subset\subset \Omega'$.  Let $A_1,A_2\in L^\infty(\Omega',\C^n)$, and  $q_1,q_2\in L^\infty(\Omega',\C)$. Assume that 
\begin{equation}
\label{eq_equality_A_q}
A_1=A_2\quad\textrm{and}\quad q_1=q_2\quad \textrm{in}\quad \Omega'\setminus\Omega.
\end{equation}
If  $C_{A_1,q_1}=C_{A_2,q_2}$ then $C'_{A_1,q_1}=C'_{A_2,q_2}$, where $C'_{A_j,q_j}$ is the set of the Cauchy data for $L_{A_j,q_j}$ in $\Omega'$, $j=1,2$. 
\end{prop}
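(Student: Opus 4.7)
The strategy is to show $C'_{A_1,q_1}\subset C'_{A_2,q_2}$; the reverse inclusion follows by symmetry. Given $u_1\in H^1(\Omega')$ solving $L_{A_1,q_1}u_1=0$ in $\Omega'$, the plan is to construct $u_2\in H^1(\Omega')$ solving $L_{A_2,q_2}u_2=0$ in $\Omega'$ with the same trace and Neumann data on $\Omega'$. First, restrict: $u_1|_\Omega\in H^1(\Omega)$ solves $L_{A_1,q_1}(u_1|_\Omega)=0$ in $\Omega$, so by the hypothesis $C_{A_1,q_1}=C_{A_2,q_2}$ there is $v_2\in H^1(\Omega)$ with $L_{A_2,q_2}v_2=0$ in $\Omega$, $Tv_2=T(u_1|_\Omega)$ and $N_{A_2,q_2}v_2=N_{A_1,q_1}(u_1|_\Omega)$.

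Next, paste these together. Since $Tv_2=T(u_1|_\Omega)$, the difference $v_2-u_1|_\Omega$ lies in $H^1_0(\Omega)$; extending by zero produces an element $\tilde w\in H^1_0(\Omega')$ (this uses only that $C^\infty_0(\Omega)\hookrightarrow C^\infty_0(\Omega')$ is an $H^1$-isometry). Define
\[
u_2:=u_1+\tilde w\in H^1(\Omega'),
\]
so that $u_2=v_2$ on $\Omega$ and $u_2=u_1$ on $\Omega'\setminus\Omega$; the identity $T'u_2=T'u_1$ in $\Omega'$ is automatic.

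The central step is verifying $L_{A_2,q_2}u_2=0$ in $\Omega'$. For $\varphi\in C^\infty_0(\Omega')$ and an open $U$, write
\[
B^{U}_{A,q}(u,\varphi):=\int_{U}\bigl(\nabla u\cdot\nabla\varphi+iA\cdot(u\nabla\varphi-\varphi\nabla u)+(A^2+q)u\varphi\bigr)\,dx,
\]
which is precisely the form appearing in \eqref{eq_trace_boundary_arbitrary}. Splitting $B^{\Omega'}_{A_2,q_2}(u_2,\varphi)-B^{\Omega'}_{A_1,q_1}(u_1,\varphi)$ into integrals over $\Omega$ and $\Omega'\setminus\Omega$, the outer pieces cancel because $A_1=A_2$, $q_1=q_2$ and $u_1=u_2$ there, leaving
\[
B^{\Omega'}_{A_2,q_2}(u_2,\varphi)-B^{\Omega'}_{A_1,q_1}(u_1,\varphi)=B^{\Omega}_{A_2,q_2}(v_2,\varphi|_\Omega)-B^{\Omega}_{A_1,q_1}(u_1|_\Omega,\varphi|_\Omega).
\]
By \eqref{eq_trace_boundary_arbitrary} the right-hand side equals $(N_{A_2,q_2}v_2-N_{A_1,q_1}(u_1|_\Omega),[\varphi|_\Omega])_\Omega$, which vanishes by the matching of Neumann data from step one. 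Since $B^{\Omega'}_{A_1,q_1}(u_1,\varphi)=0$ by assumption, $L_{A_2,q_2}u_2=0$ in $\Omega'$.

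The remaining identity $N'_{A_2,q_2}u_2=N'_{A_1,q_1}u_1$ is proved in the same spirit: testing against an arbitrary representative $g\in H^1(\Omega')$ and splitting into $\Omega$ and $\Omega'\setminus\Omega$, the outer contributions agree term by term, and the difference of the $\Omega$-integrals collapses via \eqref{eq_trace_boundary_arbitrary} to $(N_{A_2,q_2}v_2-N_{A_1,q_1}(u_1|_\Omega),[g|_\Omega])_\Omega=0$. The main subtlety to monitor is that \eqref{eq_trace_boundary_arbitrary} only requires the restrictions $\varphi|_\Omega$ and $g|_\Omega$ to lie in $H^1(\Omega)$---not in $H^1_0(\Omega)$---so no regularity of $\partial\Omega$ is ever invoked; this is precisely the flexibility afforded by the quotient-space formulation of the trace, and is what makes the gluing across the (possibly wild) interface $\partial\Omega$ go through.
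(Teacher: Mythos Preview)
Your proof is correct and follows essentially the same approach as the paper's: restrict, use the equality of Cauchy data on $\Omega$ to find a matching solution, extend the difference (an element of $H^1_0(\Omega)$) by zero to $H^1_0(\Omega')$, and verify both the equation and the Neumann data on $\Omega'$ by splitting the defining bilinear form over $\Omega$ and $\Omega'\setminus\Omega$. Your explicit use of the form $B^U_{A,q}$ is a clean way to package the computation that the paper writes out term by term, but the argument is the same.
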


\begin{proof}
Let $u_1'\in H^1(\Omega')$ be a solution to $L_{A_1,q_1}u_1'=0$ in $\Omega'$ and let $u_1=u_1'|_{\Omega}\in H^1(\Omega)$.   As $C_{A_1,q_1}=C_{A_2,q_2}$, there exists $u_2\in H^1(\Omega)$
satisfying $L_{A_2,q_2}u_2=0$ in $\Omega$ such that 
\[
Tu_ 2=T u_1\quad\textrm{and}\quad N_{A_2,q_2}u_2=N_{A_1,q_1}u_1\quad \textrm{in}\quad \Omega. 
\]
In particular, $\varphi:=u_2-u_1\in H^1_0(\Omega)\subset H^1_0(\Omega')$. We define 
\[
u_2'=u_1'+\varphi\in H^1(\Omega'),
\]
so that $u_2'=u_2$ on $\Omega$.  
It follows that $Tu_2'=Tu_1'$ in $\Omega'$. 

Let us show now that $L_{A_2,q_2}u_2'=0$ in $\Omega'$. To that end, let $\psi\in C_0^\infty(\Omega')$, and write 
\begin{align*}
\langle L_{A_2,q_2}u_2',\psi \rangle_{\Omega'}  =&\int_{\Omega'}\bigg((\nabla u_1'+\nabla \varphi)\cdot\nabla \psi+ A_2\cdot (Du_1'+D\varphi)\psi\bigg) dx\\
&+ \int_{\Omega'} \bigg(-A_2(u_1'+\varphi)\cdot D\psi+ (A_2^2+q_2)(u_1'+ \varphi)\psi\bigg) dx.
\end{align*}
Using \eqref{eq_equality_A_q}, we have 
\begin{align*}
\langle L_{A_2,q_2}u_2',\psi \rangle_{\Omega'} & = \int_{\Omega}(\nabla u_2\cdot\nabla \psi+ A_2\cdot (Du_2)\psi-A_2u_2\cdot D\psi+ (A_2^2+q_2)u_2\psi)dx\\
&+\int_{\Omega'\setminus\Omega}(\nabla u_1'\cdot\nabla \psi+ A_1\cdot (Du_1')\psi-A_1u_1'\cdot D\psi+ (A_1^2+q_1)u_1'\psi)dx\\
&+\int_{\Omega'\setminus\Omega}(\nabla \varphi\cdot\nabla \psi+ A_1\cdot (D\varphi)\psi-A_1\varphi\cdot D\psi+ (A_1^2+q_1)\varphi\psi)dx.
\end{align*}
As $\varphi\in H^1_0(\Omega)$, we get
\[
\int_{\Omega'\setminus\Omega}(\nabla \varphi\cdot\nabla \psi+ A_1\cdot (D\varphi)\psi-A_1\varphi\cdot D\psi+ (A_1^2+q_1)\varphi\psi)dx=0. 
\]
This together with the fact $N_{A_2,q_2}u_2=N_{A_1,q_1}u_1$ in $\Omega$ implies  that 
\begin{align*}
\langle L_{A_2,q_2}u_2',\psi \rangle_{\Omega'} & = ( N_{A_2,q_2}u_2 ,[\psi|_\Omega] )_{\Omega} \\
&+\int_{\Omega'\setminus\Omega}(\nabla u_1'\cdot\nabla \psi+ A_1\cdot (Du_1')\psi-A_1u_1'\cdot D\psi+ (A_1^2+q_1)u_1'\psi)dx\\
&=\langle L_{A_1,q_1}u_1',\psi \rangle_{\Omega'}=0,
\end{align*}
which shows that $L_{A_2,q_2}u_2'=0$ in $\Omega'$. 

Arguing similarly, we see that $N_{A_2,q_2}u_2'=N_{A_1,q_1}u_1'$ in $\Omega'$, which allows us to conclude that $C'_{A_1,q_1}\subset C'_{A_2,q_2}$. The same argument in the other direction gives the claim.
\end{proof}

Let us extend $q_j$, $j=1,2$, to the open ball $B$ by defining $q_j=0$ in  $B\setminus\Omega$. Then 
using Proposition \ref{prop_Cauchy_data}, Lemma \ref{lem_obstruction_to_uniqueness} and the fact that $\psi|_{\p B}=0$, we obtain that 
\[
C'_{A_1,q_1}=C'_{A_2,q_2}=C'_{A_2+\nabla\psi,q_2}=C'_{A_1,q_2}.
\]
This implies the following integral identity,
\begin{equation}
\label{eq_int_identity_new_2}
\int_B(q_1-q_2)u_1\overline{u_2}dx=0,
\end{equation}
valid for any $u_1,u_2\in H^1(B)$ satisfying $L_{A_1,q_1}u_1=0$ in $B$ and $L_{\overline{A_1},\overline{q_2}}u_2=0$ in $B$, respectively.  

Let us choose $u_1$ and $u_2$ to be the complex geometric optics solutions in $B$, given by \eqref{eq_u_1} and \eqref{eq_u_2}, respectively.  In this case, it follows from \eqref{eq_sum_phase} that 
$\Phi_1^\sharp (\cdot,\mu_1+i\mu_2;h)+ \overline{\Phi_2^\sharp (\cdot,-\mu_1+i\mu_2;h)}$ converges to zero in $L^2_{\textrm{loc}}(\R^n)$ as $h\to 0$. 

Plugging $u_1$ and $u_2$ into \eqref{eq_int_identity_new_2} gives
\[
\int_B(q_1-q_2)e^{ix\cdot\xi} e^{\Phi_1^\sharp+\overline{\Phi_2^\sharp}}dx=-\int_B(q_1-q_2)e^{ix\cdot\xi} (e^{\Phi_1^\sharp}\overline{r_2}+r_1e^{\overline{\Phi_2^\sharp}}+r_1\overline{r_2})dx.
\]
Letting $h\to 0$, and using \eqref{eq_est_phi_1}, \eqref{eq_est_r_1}, \eqref{eq_est_phi_2}, and  \eqref{eq_est_r_2}, we get 
\[
\int_B(q_1-q_2)e^{ix\cdot\xi}dx=0,
\] 
and therefore, $q_1=q_2$ in $\Omega$. The proof of Theorem \ref{thm_main} is complete.

\section*{Acknowledgements}  

The research of K.K. is partially supported by the
Academy of Finland (project 255580).   The research of
G.U. is partially supported by the National Science Foundation.

\end{document}